\definecolor{darkred}{RGB}{100,0,0}
\definecolor{darkgreen}{RGB}{0,100,0}
\definecolor{darkblue}{RGB}{0,0,150}
\eacurl\url{http://www.math.ucsd.edu/~eariasca}
\newtheorem{prp}{Proposition}
\newtheorem{lem}{Lemma}
\newtheorem{cor}{Corollary}
\def\beq{\begin{equation}}
\def\eeq{\end{equation}}
\def\beqn{\begin{eqnarray*}}
\def\eeqn{\end{eqnarray*}}
\def\bitem{\begin{itemize}}
\def\eitem{\end{itemize}}
\def\benum{\begin{enumerate}}
\def\eenum{\end{enumerate}}
\def\bmult{\begin{multline*}}
\def\emult{\end{multline*}}
\def\bcenter{\begin{center}}
\def\ecenter{\end{center}}
\newcommand{\prpref}[1]{Proposition~\ref{prp:#1}}
\newcommand{\lemref}[1]{Lemma~\ref{lem:#1}}
\newcommand{\secref}[1]{Section~\ref{sec:#1}}
\newcommand{\figref}[1]{Figure~\ref{fig:#1}}
\def\cN{\mathcal{N}}
\def\cX{\mathcal{X}}
\def\cZ{\mathcal{Z}}
\def\bbN{\mathbb{N}}
\def\bbR{\mathbb{R}}
\def\bbZ{\mathbb{Z}}
\newcommand{\E}{\operatorname{\mathbb{E}}}
\renewcommand{\P}{\operatorname{\mathbb{P}}}
\newcommand{\Var}{\operatorname{Var}}
\def\iid{\stackrel{\rm iid}{\sim}}
\def\Bin{\text{Bin}}
\def\Unif{\text{Unif}}
\def\Pois{\text{Pois}}
\def\eps{\varepsilon}
\newcommand{\IND}[1]{\mathbbm{1}_{\{ #1 \}}}
\begin{document}

\title{The Sparse Poisson Means Model}
\author{ 
Ery Arias-Castro and Meng Wang 
}
\date{Department of Mathematics, University of California, San Diego}
\maketitle

\begin{abstract}

We consider the problem of detecting a sparse Poisson mixture.
Our results parallel those for the detection of a sparse normal mixture, pioneered by \cite{MR1456646} and \cite{dj04}, when the Poisson means are larger than logarithmic in the sample size.  
In particular, a form of higher criticism achieves the detection boundary in the whole sparse regime.  
When the Poisson means are smaller than logarithmic in the sample size, a different regime arises in which simple multiple testing with Bonferroni correction is enough in the sparse regime.  
We present some numerical experiments that confirm our theoretical findings.

\medskip


\noindent {\em Keywords:} Sparse Poisson means model, goodness-of-fit tests, multiple testing, Bonferroni's method, Fisher's method, Pearson's chi-squared test, Tukey's higher criticism, sparse normal means model.
\end{abstract}

\section{Introduction}
\label{sec:intro}
The Poisson distribution is well suited to model count data in a broad variety of scientific and engineering fields.  In this paper, we consider a stylized detection problem where we observe $n$ independent Poisson counts $X_1, \dots, X_n$ from a mixture
\beq \label{model}
X_i \sim (1 - \eps)~ \Pois(\lambda_i) + \frac{\eps}{2}~ \Pois(\lambda_i') + \frac{\eps}{2} ~\Pois(\lambda_i''),
\eeq
where 
\beq \label{param_general}
\lambda_i' = \lambda_i + \Delta_i, \quad \lambda_i'' = \max(0, \lambda_i - \Delta_i), \quad \text{for some } \Delta_i >0,
\eeq
and $\eps \in [0, 1] $ is the fraction of the non-null effects.  
All the parameters are allowed to change with $n$.
We are interested in detecting whether there are any non-null effects in the sample. Specifically, we know the null means , $\lambda_1, \dots, \lambda_n$, and our goal is to test
\beq \label{problem}
H_0: \eps=0 \quad \text{versus} \quad H_1: \eps > 0.
\eeq
Put differently, we want to address the following multiple hypotheses problem
\[
H_{0,i}: X_i \sim \Pois(\lambda_i) \quad \text{versus} \quad H_{1,i}: X_i \sim (1-\eps) \Pois(\lambda_i) + \frac{\eps}2 \Pois(\lambda_i') +\frac{\eps}2 \Pois(\lambda_i'').
\]
We do assume that $\eps$ is the same for all $i$, although this is done for ease of exposition.  

This model may arise in goodness-of-fit testing for homogeneity in a Poisson process.
Suppose we record the arrival time of alpha particles over a time period and we are interested in testing for uniformity. 
One way to do so is to partition the time period into non-overlapping intervals, and count how many particles arrived with each interval.  These counts can be modeled by a Poisson distribution. 
For this problem, and any other discrete goodness-of-fit testing problems, one would typically use Pearson's chi-squared test, but we show that, under some mild conditions, this test is (grossly) suboptimal in the sparse regime where $\eps = \eps_n = o(1/\sqrt{n})$.

In another situation, we might be interested in detecting genes that are differentially expressed.  \cite{marioni2008rna} find that the variation of count data across technical replicates can be captured using a Poisson model when the over- (or under-) dispersion is not significant. Suppose we know the Poisson mean count for each gene expressed under normal conditions and want to detect a difference in expression under some other (treatment) condition.

In the model we consider here \eqref{model} the sparsity assumption is on the number of nonzero effects, which on average is $n \eps$.  
We assume that $\eps \to 0$, so the number of nonzero effects is negligible compared to the number $n$ of bins or genes being tested.
And so there are some nonzero effects under the alternative, we assume throughout the paper that 
\beq \label{epsilon}
n \eps \to \infty.
\eeq
We note that sparsity here has a different meaning from the use in the literature on sparse multinomials \citep{MR0314193, MR0370871}.
We note that sparsity here has a different meaning from the use in the literature on sparse multinomials \cite{MR0314193, MR0370871}, where the number of the bins is large so that some bins have small expected counts.

The Poisson sparse mixture model we consider here is analogous to the normal sparse mixture model pioneered by \cite{MR1456646} and \cite{dj04}, where the normal location family $\cN(\lambda, \lambda)$ plays the role of the Poisson family ${\rm Pois}(\lambda)$.
(We note that in the normal model, one can work with $\cN(\mu, 1)$, $\mu = \sqrt{\lambda}$, without loss of generality, while such a reduction does not apply to the Poisson model.)
Our results for the Poisson model are completely parallel to those for the normal model when the Poisson means are large enough that the normalized counts
\beq \label{Z}
Z_i := (X_i - \lambda_i)/\sqrt{\lambda_i}
\eeq
are uniformly well-approximated by the standard normal distribution under the null.   
Specifically, we show that this is the case when
\beq \label{normal_cond}
\min_i \lambda_i \gg \log n.
\eeq
(For two sequences $(a_n), (b_n) \subset \bbR_+$, $a_n \gg b_n$ means that $a_n/b_n\to \infty$.)  
In particular, we show that multiple testing via the higher criticism, which \cite{dj04} developed based on an idea of J.~Tukey, is asymptotically optimal to first order, just as in the normal model.
To show this, we use care in approximating the tails of the Poisson distribution with the tails of the normal distribution.
This is done by standard moderate deviations bounds.

When the Poisson means are smaller, by which we mean
\beq \label{small}
\max_i \lambda_i \ll \log n,
\eeq
we uncover a different regime where multiple testing via Bonferroni correction is optimal in the sparse regime.  In this regime, the normal approximation to the Poisson distribution is not uniformly valid, and in fact not valid at all for those indices $i$ for which $\lambda_i$ remains fixed.
We use large deviations bounds to control the tails of the Poisson distribution. 

In any case, we assume that the expected counts are lower bounded by a positive constant, concretely
\beq \label{lambda-lb}
\lambda_i \ge 1, \quad \forall i = 1,\dots,n.
\eeq
This is to make the paper self-contained, and also because in practice it is common to pool together bins to make the expected counts larger than some pre-specified minimum.

The remainder of the paper is organized as follows.
In \secref{bound}, we derive information lower bounds under various conditions on the Poisson means.
In \secref{perfm}, we study the Pearson's chi-squared goodness-of-fit test and also the max test, which is closely related to multiple testing with Bonferroni correction, showing that none of them is optimal in all sparsity regimes. We then study the higher criticism and show that it is optimal in all sparsity regimes, matching the information bound to first-order. 
In \secref{sim}, we show the result of some numerical simulations to accompany our theoretical findings.
\secref{disc} is a discussion section.
The proofs are gathered in \secref{proofs}.
We then briefly touch on the one-sided setting in \secref{one-sided}.

\section{Information Bounds}
\label{sec:bound}

We are particularly interested in regimes where the proportion of non-null effects tends to zero as the sample size grows to infinity, i.e.~$\eps \to 0$ as $n \to \infty$. 
We follow the literature on the normal sparse mixture model \citep{MR1456646,dj04,cai2010optimal}.  
We parameterize 
\beq \label{eps}
\eps = n^{-\beta}, \quad \text{ where $\beta \in (0, 1)$ is fixed}
\eeq
and consider two regimes where the detection problem behaves quite differently: the sparse regime where $\beta \in (1/2 , 1)$ and the dense regime where $\beta \in (0, 1/2)$.
We then parameterize the Poisson means in \eqref{model} differently in each regime.
When the $\lambda_i$'s are relatively large, we are guided by the correspondence between the normal model and the Poisson model via the normalized counts \eqref{Z}.

Suppose we know the fraction $\eps$ and all null and non-null Poisson rates. By the Neyman-Pearson fundamental lemma, the most powerful test for this simple versus simple hypothesis testing problem is the likelihood ratio test (LRT). 
Hence the performance of the LRT gives an information bound for this detection problem.
We investigate this information bound by finding the conditions such that the risk (the sum of probabilities of type I and type II errors) of LRT goes to one as $n \to \infty$. We say a test is asymptotically powerful when its risk tends to zero and asymptotically powerless when its risk tends to one.
All the limits are with respect to $n \to \infty$.

\subsection{Dense Regime}
\label{sec:dense}

Guided by the correspondence with the normal model, in the dense regime where $\beta < 1/2$, we parameterize the effects as follows
\beq \label{parameter-dense}
\Delta_i = n^s \cdot \sqrt{\lambda_i},
\eeq
where $s \in \bbR$ is fixed.
Define
\beq \label{lower_den}
\rho_{\rm dense}(\beta) = \frac{\beta}2 - \frac14.
\eeq

\begin{prp} \label{prp:lower_den}
Consider the testing problem \eqref{problem} with parameterizations \eqref{eps} with $\beta < 1/2$ and \eqref{parameter-dense}.
All tests are asymptotically powerless if 
\beq \label{cond-lower-den}
s < \rho_{\rm dense}(\beta).
\eeq
\end{prp}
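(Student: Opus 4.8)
The plan is to prove powerlessness by the second moment (chi-squared) method, exploiting that here the alternative is itself a product measure, so that — unlike in the classical sparse normal means lower bound — no mixture over configurations of non-null coordinates is needed. Write $P_0 = \bigotimes_{i=1}^n \Pois(\lambda_i)$ for the null law of $(X_1,\dots,X_n)$ and $P_1 = \bigotimes_{i=1}^n Q_i$ for the alternative, where $Q_i = (1-\eps)\Pois(\lambda_i) + \tfrac{\eps}{2}\Pois(\lambda_i') + \tfrac{\eps}{2}\Pois(\lambda_i'')$. The optimal risk over all tests equals $1 - \|P_0 - P_1\|_{\rm TV}$, and by Cauchy--Schwarz $\|P_0 - P_1\|_{\rm TV} \le \tfrac12\sqrt{\chi^2(P_1\,\|\,P_0)}$ with $\chi^2(P_1\,\|\,P_0) = \E_0[L^2] - 1$, $L = dP_1/dP_0$. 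So it suffices to show $\E_0[L^2] \to 1$.

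First I would reduce the second moment to a single-coordinate computation. Writing $p_\mu$ for the $\Pois(\mu)$ pmf, $r_i' = p_{\lambda_i'}/p_{\lambda_i}$, $r_i'' = p_{\lambda_i''}/p_{\lambda_i}$, the per-coordinate likelihood ratio is $L_i = 1 + \eps W_i$ with $W_i = \tfrac12(r_i' + r_i'') - 1$; since $\E_0[r_i'] = \E_0[r_i''] = 1$ we get $\E_0[W_i] = 0$ and hence $\E_0[L_i^2] = 1 + \eps^2\,\E_0[W_i^2]$, so that $\E_0[L^2] = \prod_{i=1}^n \big(1 + \eps^2\,\E_0[W_i^2]\big)$. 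The key identity, obtained by summing $\sum_{x\ge0}(\mu\nu/\lambda)^x/x!$, is
\[
\E_{\Pois(\lambda)}\!\left[\frac{p_\mu(X)}{p_\lambda(X)}\cdot\frac{p_\nu(X)}{p_\lambda(X)}\right] = \exp\!\Big(\tfrac{(\mu-\lambda)(\nu-\lambda)}{\lambda}\Big).
\]

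Next I would specialize to the dense parameterization \eqref{parameter-dense}. Since $\beta < 1/2$, $\rho_{\rm dense}(\beta) < 0$, so $s < 0$ and, using $\lambda_i \ge 1$, $\Delta_i/\lambda_i = n^s/\sqrt{\lambda_i}\le n^s \to 0$ uniformly in $i$; hence for $n$ large, $\lambda_i'' = \lambda_i - \Delta_i$ for every $i$. The identity then gives $\E_0[(r_i')^2] = \E_0[(r_i'')^2] = e^{n^{2s}}$ and $\E_0[r_i' r_i''] = e^{-n^{2s}}$ (because $\Delta_i^2/\lambda_i = n^{2s}$, independently of $i$), so $\E_0[W_i^2] = \cosh(n^{2s}) - 1$ for every $i$. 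Therefore $\E_0[L^2] = \big(1 + \eps^2(\cosh(n^{2s})-1)\big)^n$, and with $1 + x \le e^x$,
\[
\log\E_0[L^2] \le n\,\eps^2\,\big(\cosh(n^{2s}) - 1\big) = n^{1-2\beta}\big(\cosh(n^{2s}) - 1\big) \asymp n^{1-2\beta+4s},
\]
using $\cosh(n^{2s}) - 1 \sim \tfrac12 n^{4s}$ since $n^{2s}\to 0$. The exponent $1 - 2\beta + 4s$ is negative exactly when $s < \tfrac{\beta}{2} - \tfrac14 = \rho_{\rm dense}(\beta)$, so under \eqref{cond-lower-den} we get $\E_0[L^2]\to 1$, $\chi^2(P_1\,\|\,P_0)\to 0$, and the optimal risk tends to $1$.

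There is no serious obstacle: this is essentially a one-line exponent computation once the moment identity is in hand. The only points that need a little care are (i) ruling out the truncated regime $\Delta_i > \lambda_i$, which is immediate from $\Delta_i/\lambda_i \le n^s \to 0$, and (ii) the uniformity in $i$ of the per-coordinate bound, which here is automatic because the only relevant quantity, $\Delta_i^2/\lambda_i = n^{2s}$, does not depend on $i$. (If one wished to avoid appealing to $\lambda_i \ge 1$, the same argument works coordinatewise with $\cosh(n^{2s}) - 1 \le n^{4s}$ for $n$ large, and for any coordinate with $\Delta_i > \lambda_i$ one has $e^{\lambda_i} \le e^{\Delta_i^2/\lambda_i}$ and $e^{-\Delta_i}\le 1$, so such coordinates only help.)
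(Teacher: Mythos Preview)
Your proof is correct and follows essentially the same approach as the paper: the untruncated second moment method, computing $\E_0(L_i^2)$ per coordinate via the identity $\sum_x p_\mu(x)p_\nu(x)/p_\lambda(x)=\exp((\mu-\lambda)(\nu-\lambda)/\lambda)$ to arrive at $\E_0(L_i^2)=1+\eps^2(\cosh(n^{2s})-1)$, and then concluding from $n\eps^2(\cosh(n^{2s})-1)\asymp n^{1-2\beta+4s}\to 0$. Your write-up is in fact slightly more careful than the paper's in explicitly verifying that $\lambda_i''=\lambda_i-\Delta_i$ (no truncation) via $\Delta_i/\lambda_i\le n^s\to 0$.
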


The expert will recognize the perfect correspondence with the detection boundary for the dense regime in the two-sided detection problem in the normal model.

\subsection{Sparse Regime}

Guided by the correspondence with the normal model, in the sparse regime where $\beta > 1/2$, we start by parameterizing the effects as follows
\beq \label{parameter-sparse}
\Delta_i = \sqrt{2 r \log n} \cdot \sqrt{\lambda_i} ,
\eeq
where $r \in (0, 1)$ is fixed.
Define
\beq \label{lower}
\rho_{\rm sparse}(\beta) = \begin{cases} 
\beta - 1/2, & 1/2 < \beta \leq 3/4, \\ 
(1 - \sqrt{1 - \beta})^2, & 3/4 < \beta < 1.
\end{cases}
\eeq

\begin{prp} \label{prp:lower}
Consider the testing problem \eqref{problem} with parameterizations \eqref{eps} with $\beta > 1/2$ and \eqref{parameter-sparse} with \eqref{normal_cond}.  
All tests are asymptotically powerless if 
\beq \label{cond-lower}
r < \rho_{\rm sparse}(\beta).
\eeq
\end{prp}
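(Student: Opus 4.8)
The plan is to establish powerlessness by bounding the chi-square distance between the null and alternative distributions, i.e.\ showing that the second moment of the likelihood ratio $L = \prod_i L_i$ (with $L_i$ the per-coordinate likelihood ratio under the mixture) stays bounded by $1 + o(1)$, which forces $\chi^2(\P_1, \P_0) \to 0$ and hence the risk of the LRT to one. Since the coordinates are independent under both hypotheses and (in the sparse parameterization \eqref{parameter-sparse}) all $\lambda_i$ enter only through the scaling $\Delta_i = \sqrt{2r\log n}\sqrt{\lambda_i}$, we have $\E_0 L^2 = \prod_i \E_0 L_i^2$, and it suffices to show $\sum_i \big(\E_0 L_i^2 - 1\big) \to 0$, i.e.\ $n \cdot \max_i\big(\E_0 L_i^2 - 1\big) \to 0$ when $r < \rho_{\rm sparse}(\beta)$. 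Writing $L_i = 1 - \eps + \tfrac{\eps}{2}(p'_i/p_i)(X_i) + \tfrac{\eps}{2}(p''_i/p_i)(X_i)$ where $p_i, p'_i, p''_i$ are the pmfs of $\Pois(\lambda_i), \Pois(\lambda_i'), \Pois(\lambda_i'')$, expanding the square gives $\E_0 L_i^2 - 1 = O(\eps^2)\cdot\big[\text{cross terms}\big]$; the dominant contribution is $\tfrac{\eps^2}{4}\,\E_0\big[(p'_i/p_i)(X_i)\big]^2 = \tfrac{\eps^2}{4}\sum_{k\ge 0} p'_i(k)^2/p_i(k)$, which is exactly $\tfrac{\eps^2}{4}\big(1 + \chi^2(\Pois(\lambda_i'),\Pois(\lambda_i))\big)$, plus a comparable term from $\lambda_i''$ and lower-order mixed terms.

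The second step is to compute (or sharply upper bound) $\chi^2\big(\Pois(\lambda+\Delta),\Pois(\lambda)\big)$. For two Poisson laws with means $\mu_1 = \lambda + \Delta$ and $\mu_2 = \lambda$ one has the closed form $1 + \chi^2 = \sum_k e^{-(2\mu_1-\mu_2)}\mu_1^{2k}/(k!^2 \mu_2^k) \cdot$ (appropriate normalization) $= \exp\!\big((\mu_1-\mu_2)^2/\mu_2\big) = \exp(\Delta^2/\lambda)$. With $\Delta_i^2/\lambda_i = 2r\log n$ this gives $1 + \chi^2 = n^{2r}$, so the leading term in $\E_0 L_i^2 - 1$ is of order $\eps^2 n^{2r} = n^{2r - 2\beta}$, and $n$ times this is $n^{1 + 2r - 2\beta}$, which tends to zero precisely when $r < \beta - 1/2$. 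This recovers the first branch of $\rho_{\rm sparse}$. For $\beta > 3/4$ this crude second-moment bound is not tight, and the standard remedy (following Ingster and Donoho–Jin) is to truncate: replace $L_i$ by $\tilde L_i$ defined by restricting the likelihood-ratio contribution to the event $\{X_i \le t_i\}$ for a carefully chosen threshold $t_i$ (equivalently, censor the upper tail), show that the truncated and untruncated tests have asymptotically the same risk under $H_0$ and $H_1$, and then run the second-moment computation on $\tilde L_i$. The truncation removes the rare large-$X_i$ events that blow up the $\chi^2$ integral, and optimizing the threshold yields the exponent $(1 - \sqrt{1-\beta})^2$ on the second branch.

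The main obstacle is the truncated second-moment calculation in the very sparse regime $\beta \in (3/4,1)$: one must control $\E_0[\tilde L_i^2]$ where the integrand involves the Poisson tail $p'_i(k)^2/p_i(k)$ summed over $k \le t_i$, and the needed estimate is a Poisson moderate/large-deviation bound showing that, under \eqref{normal_cond} (so $\min_i\lambda_i \gg \log n$), the Poisson tails are close enough to Gaussian tails that the optimization over $t_i$ produces exactly the normal detection-boundary exponent rather than something weaker. Concretely, one needs $\P_0(X_i \ge \lambda_i + x\sqrt{\lambda_i}) = n^{-(1+o(1))x^2/2}$ uniformly for $x$ up to the relevant range, which is where the condition $\lambda_i \gg \log n$ is used; without it the Poisson tail is heavier and the argument would break (and indeed a different regime, treated elsewhere in the paper, takes over). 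A secondary technical point is handling the lower component $\Pois(\lambda_i'')$ with the truncation at $\max(0,\lambda_i - \Delta_i)$, but since $\Delta_i = \sqrt{2r\log n}\sqrt{\lambda_i} \ll \lambda_i$ under \eqref{normal_cond}, the clipping in \eqref{param_general} is inactive for large $n$ and $\lambda_i'' = \lambda_i - \Delta_i > 0$, so this component is symmetric to the upper one and contributes the same exponent; the two-sided structure then only affects constants, not the boundary. Once $\E_0 \tilde L^2 \to 1$ is established, the conclusion that every test is asymptotically powerless is the standard consequence of $L^1$-convergence of $\P_1$ to $\P_0$.
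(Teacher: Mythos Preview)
Your overall strategy is the paper's strategy: untruncated second moment for $\beta\le 3/4$, truncated second moment (\`a la Ingster, in the Butucea--Ingster form) for $\beta>3/4$, with moderate-deviation Poisson tail estimates exploiting \eqref{normal_cond} to recover the Gaussian exponents.  Two points need tightening.

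\textbf{Two-sided truncation is essential.}  You propose truncating on $\{X_i\le t_i\}$ only.  That is not enough in this two-sided model: the term $\tfrac{\eps^2}{4}\sum_k p''_i(k)^2/p_i(k)$, coming from the $\Pois(\lambda_i'')$ component with $\lambda_i''<\lambda_i$, has $p''_i(k)/p_i(k)=e^{\lambda_i-\lambda_i''}(\lambda_i''/\lambda_i)^k$, which is large for \emph{small} $k$.  Upper truncation leaves this contribution untouched, so $\E_0[\tilde L_i^2]-1$ would still be of order $\eps^2 n^{2r}$ and the argument would fail for $\beta>3/4$.  The paper truncates symmetrically on $A_i=\{y_i\le X_i\le x_i\}$ with $x_i,y_i=\lambda_i\pm\sqrt{2(1+\eta)\log n}\,\sqrt{\lambda_i}$; in the second-moment bound the upper threshold controls the $\lambda_i'$ term via $P_{\lambda_i'^2/\lambda_i}([0,x_i])$ and the lower threshold controls the $\lambda_i''$ term via $P_{\lambda_i''^2/\lambda_i}([y_i,\infty))$, both handled by \lemref{tail-bound}.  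Your remark that ``this component is symmetric to the upper one'' is correct at the level of exponents, but that symmetry is precisely why the truncation must be symmetric too.

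\textbf{First-moment check.}  Your phrase ``show that the truncated and untruncated tests have asymptotically the same risk'' is not the operative statement.  What one proves (and what the paper verifies) is $\E_0(\tilde L)\ge 1-o(1)$, which together with $\E_0(\tilde L^2)\le 1+o(1)$ gives $\tilde L\to 1$ in $L^2(\P_0)$; since $0\le L-\tilde L$ and $\E_0(L-\tilde L)=1-\E_0(\tilde L)\to 0$, one gets $L\to 1$ in $L^1(\P_0)$ and hence total-variation contiguity.  The first-moment bound requires showing $\P_1(A_i^c)=o(1/n)$ uniformly in $i$, which again uses the moderate-deviation estimates for $P_{\lambda_i}$, $P_{\lambda_i'}$, and $P_{\lambda_i''}$---all three.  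This step is routine but should be stated.
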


Thus, Propositions~\ref{prp:lower_den} and~\ref{prp:lower} together show that, when \eqref{normal_cond} holds, meaning that $\min_i \lambda_i \gg \log n$, the detection boundary for the Poisson model is in perfect correspondence with the detection boundary for the normal model.  

When the null means $(\lambda_i : i=1,\dots,n)$ are smaller, a different detection boundary emerges in the sparse regime.  To better describe the detection boundary that follows, we adopt the following parameterization
\beq \label{lambda_small}
\lambda_i' = \lambda_i^{1-\gamma} (\log n)^\gamma, \quad \lambda_i'' = 0, \quad \text{where $\gamma> 0$ is fixed.}
\eeq  
Indeed, this particular case corresponds to $\Delta_i = \lambda_i^{1-\gamma} (\log n)^\gamma$, and assuming the $\lambda_i$'s are smaller than $\log n$ as we do, this implies that $\lambda_i'' = 0$, as it cannot be negative.

\begin{prp} \label{prp:small_lb}
Consider the testing problem \eqref{problem} with parameterizations \eqref{eps} with $\beta > 1/2$ and \eqref{lambda_small} with \eqref{small} and \eqref{lambda-lb}.  
All tests are asymptotically powerless if $\gamma < \beta$.
\end{prp}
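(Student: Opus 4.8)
The plan is to bound the risk of the likelihood ratio test from below by controlling the $\chi^2$-divergence (or second moment of the likelihood ratio) between the null and the alternative, which is the standard route for this kind of information bound. Write $L = \prod_{i=1}^n L_i$ for the likelihood ratio, where $L_i = (1-\eps) + \eps\, \frac{p_{\lambda_i'}(X_i)}{p_{\lambda_i}(X_i)}$ and $p_\mu$ denotes the Poisson$(\mu)$ pmf (here $\lambda_i'' = 0$, so only the up-shifted component contributes, but the $\lambda_i''$ term only helps). Under $H_0$, $\E_0[L] = 1$, and it suffices to show $\E_0[L^2] \to 1$, since then $L \to 1$ in $L^2$ under the null and the two hypotheses merge. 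Because the coordinates are independent, $\E_0[L^2] = \prod_i \E_0[L_i^2]$, and expanding the square gives $\E_0[L_i^2] = 1 + \eps^2\bigl(\E_0[(p_{\lambda_i'}(X_i)/p_{\lambda_i}(X_i))^2] - 1\bigr) = 1 + \eps^2\bigl(e^{(\lambda_i' - \lambda_i)^2/\lambda_i} - 1\bigr)$, using the exact identity $\E_0[(p_{\lambda'}(X)/p_{\lambda}(X))^2] = e^{(\lambda'-\lambda)^2/\lambda}$ for Poisson$(\lambda)$. Hence it is enough to show $n\eps^2 \bigl(e^{(\lambda_i'-\lambda_i)^2/\lambda_i} - 1\bigr) \to 0$ uniformly, i.e. $\log\prod_i\E_0[L_i^2] \le n\eps^2 \max_i\bigl(e^{(\lambda_i'-\lambda_i)^2/\lambda_i}-1\bigr)\to 0$.

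Next I would plug in the parameterization. With $\lambda_i' = \lambda_i^{1-\gamma}(\log n)^\gamma$ and $\lambda_i \ge 1$, we have $\lambda_i' - \lambda_i = \lambda_i^{1-\gamma}(\log n)^\gamma - \lambda_i$, and since $\lambda_i \ll \log n$ the dominant term is $\lambda_i^{1-\gamma}(\log n)^\gamma$, so $(\lambda_i'-\lambda_i)^2/\lambda_i = (1+o(1))\,\lambda_i^{1-2\gamma}(\log n)^{2\gamma}$. The key point is to find the worst case over $\lambda_i \in [1, \max_j\lambda_j]$ with $\max_j \lambda_j \ll \log n$. If $\gamma \le 1/2$ the exponent $\lambda_i^{1-2\gamma}(\log n)^{2\gamma}$ is maximized by taking $\lambda_i$ as large as allowed, but even then it is at most $(\max_j\lambda_j)^{1-2\gamma}(\log n)^{2\gamma} \ll (\log n)^{1-2\gamma}(\log n)^{2\gamma} = \log n$; if $\gamma > 1/2$ it is maximized at $\lambda_i = 1$, giving exactly $(\log n)^{2\gamma} \cdot (1+o(1))$. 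In either case we need $n\eps^2 e^{(\lambda_i'-\lambda_i)^2/\lambda_i} \to 0$. With $\eps = n^{-\beta}$, $n\eps^2 = n^{1-2\beta}$, and $\beta > 1/2$ makes this exponentially small in $\log n$; we need the exponential factor $e^{(1+o(1))(\log n)^{2\gamma}}$ — more precisely $n^{(1+o(1))(\log n)^{2\gamma - 1}}$ when $\gamma>1/2$, or $n^{o(1)}$ when $\gamma\le 1/2$ — not to overwhelm it. For $\gamma \le 1/2 < \beta$ this is immediate since the exponential is $n^{o(1)}$. For $1/2 < \gamma < \beta$ one has $(\log n)^{2\gamma-1} \to \infty$, so naively $e^{(\log n)^{2\gamma}}$ beats $n^{2\beta-1}$ — this shows the plain second-moment bound is too crude in the sparse-exponent regime and must be localized.

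The main obstacle, therefore, is exactly the one familiar from the normal sparse-mixture lower bounds: the vanilla $L^2$ bound fails once $\gamma$ (the analogue of the signal-strength exponent $r$) is large, because rare large values of $X_i$ under the alternative inflate the second moment. The remedy is truncation: replace $L_i$ by $\tilde L_i = (1-\eps) + \eps\, \frac{p_{\lambda_i'}(X_i)}{p_{\lambda_i}(X_i)}\IND{X_i \le t_n}$ for a threshold $t_n$ chosen so that (i) under the alternative the discarded event $\{X_i > t_n\}$ has total probability $o(1)$ over all $i$ — this uses a Poisson upper-tail (large deviations) bound, and here the hypothesis $\gamma < \beta$ enters, ensuring $n\eps\,\P_{\lambda_i'}(X_i > t_n)\to 0$ — and (ii) the truncated second moment $n\eps^2\bigl(\E_0[(p_{\lambda_i'}(X_i)/p_{\lambda_i}(X_i))^2\IND{X_i\le t_n}] - \P_0(X_i\le t_n)^{?}\bigr)\to 0$. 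Concretely I would take $t_n$ of order $(\log n)/\log(1/\eps) \cdot$ (constant) — i.e. $t_n \asymp (\log n)/\log n = O(1)$ inflated appropriately — chosen right below the point where the Poisson tail $\P_{\lambda_i'}(X > t)$ crosses $1/(n\eps)$; then estimate $\E_0[(p_{\lambda_i'}(X)/p_{\lambda_i}(X))^2\IND{X\le t_n}]$ by summing the Poisson pmf ratios $e^{2(\lambda_i - \lambda_i')} (\lambda_i'/\lambda_i)^{2x} e^{-\lambda_i}\lambda_i^x/x!$ over $x \le t_n$ and checking this is $1 + o(1/(n\eps^2))$. Verifying (i) and (ii) simultaneously — showing the single threshold $t_n$ works uniformly in $i$ given only $\lambda_i \in [1,\max_j\lambda_j]$, $\max_j\lambda_j \ll \log n$, and $\gamma < \beta$ — is the technical heart of the argument; everything else is the routine reduction through the $L^2$ (now truncated-$L^2$) distance and Le Cam's two-point bound $\mathrm{Risk} \ge 1 - \tfrac12\sqrt{\E_0[\tilde L^2]-1} - o(1)$.
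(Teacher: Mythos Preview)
Your overall plan --- truncated second moment, with the untruncated bound sufficing for $\gamma \le 1/2$ and truncation needed for $1/2 \le \gamma < \beta$ --- is exactly the paper's approach. Two points, however, are not right as stated and would need to be repaired.

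First, you cannot dismiss the $\lambda_i''=0$ component by saying it ``only helps.'' The mixture in \eqref{model} keeps the $\frac{\eps}{2}\Pois(0)$ piece, so $L_i$ has an extra term $\frac{\eps}{2}\,e^{\lambda_i}\IND{X_i=0}$, and the second moment picks up an additive $\frac{\eps^2}{4}e^{\lambda_i}$ (plus cross terms). This is exactly the $e^{\lambda_i}$ contribution the paper must handle; it is harmless because \eqref{small} gives $\eps^2 e^{\lambda_i} \le n^{-2\beta + o(1)} = o(1/n)$, but it must be included. Second, your truncation threshold is off. You propose $t_n$ so that $P_{\lambda_i'}(X>t_n)\approx 1/(n\eps)$, but that alone makes $t_n$ too small: you also need $P_{\lambda_i}(X>t_n)=o(1/n)$ for the first-moment condition $\E_0[\tilde L]\to 1$, and since $\gamma<\beta$ this null-tail constraint is the binding one. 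The paper takes $x_i = (1+c)\,\dfrac{\log n}{\log\zeta_i}$ with $\zeta_i = (\log n)/\lambda_i$ (so $x_i \asymp \log n/\log\log n$ when $\lambda_i$ is bounded, not $O(1)$), which via the Chernoff bound $\log P_\lambda(X>x)\le -\lambda h(x/\lambda)$ yields $P_{\lambda_i}(A_i^c)\le n^{-(1+c/3)}$ and $\eps P_{\lambda_i'}(A_i^c)\le n^{-\beta-(1+c/3)(1-\gamma)}=o(1/n)$. With this $x_i$, the truncated second moment in the case $\gamma\ge 1/2$ is bounded (after the Chernoff estimate on $P_{\lambda_i'^2/\lambda_i}([0,x_i])$) by $n^{-2\beta+(1+c)(2\gamma-1)+o(1)}$, which is $o(1/n)$ once $c>0$ is small enough, precisely because $\gamma<\beta$.
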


\section{Tests}
\label{sec:perfm}

In this section we analyze some tests that are shown to achieve parts of the detection boundary.  We find that the chi-squared test achieves the detection boundary in the dense regime, the test based on the maximum normalized count (which is closely related to multiple testing with Bonferroni correction) achieves the detection boundary in the very sparse regime, while multiple testing with the higher criticism achieves the detection boundary in all regimes.

\subsection{The chi-squared test}

We start by analyzing Pearson's chi-squared test, which rejects for large values of 
\beq \label{chi-square-stats}
D = \sum_{i=1}^n \frac{(X_i - \lambda_i)^2}{\lambda_i}.
\eeq
The rationale behind using this test is two-fold.  
On the one hand, $D = \sum_i Z_i^2$ --- where the $Z_i$'s are defined in \eqref{Z} --- is the analog of the chi-squared test that plays a role in detecting a normal mean in the dense regime.  
On the other hand, this is one of the most popular approaches for goodness-of-fit testing if one interprets $X_1, \dots, X_n$ as the counts in a sample of size $N \sim {\rm Pois}(\sum_i \lambda_i)$ with values in $\{1, \dots, n\}$. 

Although we could state a more general result, we opt for simplicity and state a performance bound when the expected counts are not too small.

\begin{prp} \label{prp:chi-squared}
Consider the testing problem \eqref{problem} with \eqref{lambda-lb}, and let $a_i = \Delta_i^2/\lambda_i$.  
Then chi-squared test is asymptotically powerful if 
\beq \label{chi2-1}
\eps \sum_i a_i \gg \sqrt{n} \quad \text{and} \quad \eps \Big(\sum_i a_i\Big)^2 \gg \sum_i a_i^2,
\eeq 
and asymptotically powerless if 
\beq \label{chi2-2}
\eps \sum_i a_i \ll \sqrt{n} \quad \text{and} \quad
\eps \sum_i a_i^2 = o(n) \quad \text{and} \quad \eps \sum_i a_i^4 = o(n^2).
\eeq 
\end{prp}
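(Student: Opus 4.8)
Write $D = \sum_{i=1}^n D_i$ with $D_i := (X_i-\lambda_i)^2/\lambda_i$; the $D_i$ are independent, so the whole argument reduces to moment computations (plus a central limit theorem for the powerless part). Under $H_0$, $X_i\sim\Pois(\lambda_i)$ gives $\E_0 D_i = 1$ and, from the Poisson central moments, $\Var_0 D_i = 2 + 1/\lambda_i$, so $\E_0 D = n$ and $2n \le \Var_0 D \le 3n$ by \eqref{lambda-lb}. Under $H_1$, conditioning on which of the three mixture components produced $X_i$ and using $\E[(Y-\lambda_i)^2] = \Var Y + (\E Y - \lambda_i)^2$ for Poisson $Y$, one finds $\E_1 D_i = 1 + \eps a_i$ in the generic case $\lambda_i'' = \lambda_i-\Delta_i$, hence $\E_1 D - \E_0 D = \eps\sum_i a_i =: m_n$; in the boundary case $\lambda_i'' = 0$ the third component is a point mass at $0$ and the constant changes, but still $\E_1 D_i - 1 \asymp \eps a_i$. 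A longer computation with fourth moments gives $\Var_1 D = \Var_0 D + O\big(\eps\sum_i a_i + \eps\sum_i a_i^2\big)$, while the law of total variance gives $\Var_1 D \ge (1-\eps)\Var_0 D$. Assumption \eqref{lambda-lb} is used throughout to keep the $1/\lambda_i$-type terms bounded and $\Var_0 D_i$ away from $0$.

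For the \textbf{powerful} direction I would apply Chebyshev's inequality to the test that rejects when $D > n + m_n/2$. The type~I error is at most $4\Var_0 D/m_n^2 = O(n/m_n^2)\to 0$ by the first half of \eqref{chi2-1}. The type~II error is at most $4\Var_1 D/m_n^2$, whose three pieces are $O(n/m_n^2)$, $O(1/m_n)$, and $O\big(\eps\sum_i a_i^2/m_n^2\big) = O\big(\sum_i a_i^2 / (\eps(\sum_i a_i)^2)\big)$; the first tends to $0$ by the first half of \eqref{chi2-1}, the second because $m_n \gg \sqrt n \to\infty$, and the third by the second half of \eqref{chi2-1}.

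For the \textbf{powerless} direction I would show that $T_n := (D-n)/\sqrt{\Var_0 D}$ has limit law $\cN(0,1)$ under \emph{both} hypotheses. Under $H_0$ this is a Lyapunov CLT: the eighth Poisson central moment gives $\sum_i\E_0(D_i-1)^4 = O(n)$ while $\Var_0 D\asymp n$, so the Lyapunov ratio is $O(1/n)$. Under $H_1$, decompose $T_n = \frac{D-\E_1 D}{\sqrt{\Var_1 D}}\cdot\sqrt{\Var_1 D/\Var_0 D} + \frac{\E_1 D - n}{\sqrt{\Var_0 D}}$: the last term is $\asymp m_n/\sqrt n\to 0$ by $\eps\sum_i a_i\ll\sqrt n$; the ratio $\Var_1 D/\Var_0 D\to 1$ because $\eps\sum_i a_i + \eps\sum_i a_i^2 = o(n)$, which also forces $\Var_1 D\asymp n$; and the first factor obeys a Lyapunov CLT since $\sum_i\E_1(D_i-\E_1 D_i)^4 = O\big(n + \eps\sum_i a_i^4\big)$ (after absorbing the lower powers, using $a_i, a_i^2, a_i^3 \le 1 + a_i^4$) while $\Var_1 D\asymp n$, making the Lyapunov ratio $O(1/n) + O\big(\eps\sum_i a_i^4/n^2\big)\to 0$ by \eqref{chi2-2}. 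Since the chi-squared test rejects for large $D$, equivalently for large $T_n$, and $T_n$ has the same continuous limit law under $H_0$ and $H_1$, P\'olya's theorem gives $\sup_t|\P_0(T_n\le t) - \P_1(T_n\le t)|\to 0$, so every threshold test has risk tending to $1$.

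The main obstacle will be the moment bookkeeping for the mixture under $H_1$: the fourth central moment of each $D_i$ (needed both for $\Var_1 D$ and for the type~II bound) and its eighth central moment (for the Lyapunov condition), in each case verifying — via $\Delta_i^2 = a_i\lambda_i$ and \eqref{lambda-lb}, expanding powers of $(X_i-\lambda_i) = (X_i - \lambda_i^{\ast}) + (\lambda_i^{\ast} - \lambda_i)$ around the relevant mean $\lambda_i^{\ast}\in\{\lambda_i',\lambda_i''\}$ — that every resulting term is $O(1 + \eps a_i^k)$ with $k\le 4$, the leading one being $\eps\Delta_i^8/\lambda_i^4 = \eps a_i^4$. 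One must also treat the boundary case $\lambda_i'' = 0$ separately, where that component is degenerate; there $\Delta_i \ge \lambda_i$ so $\lambda_i \le a_i$, and the bounds only improve (less spread) while constants change. The point is that \eqref{chi2-2} is exactly the hypothesis needed to close the argument.
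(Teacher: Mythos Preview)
Your plan is correct and follows essentially the same route as the paper: compute the first two moments of $D$ under $H_0$ and $H_1$, invoke Chebyshev for the powerful direction, and for the powerless direction verify Lyapunov's condition (with fourth moments of $D_i$) to get a common $\cN(0,1)$ limit. The paper packages the Chebyshev/CLT step into a single helper lemma (\lemref{basic}) rather than writing it out, and it records the alternative fourth-moment bound as $\E_1(D_i-\E_1 D_i)^4 \le C(1+\eps(a_i+a_i^4))$, which matches your $O(1+\eps a_i^4)$ after absorbing $a_i\le 1+a_i^4$. One point where you are actually more careful than the paper: the boundary case $\lambda_i''=0$ is glossed over there (the stated identity $\E_1 D = n+\eps\sum_i a_i$ is exact only when every $\lambda_i''=\lambda_i-\Delta_i>0$), whereas you note that in that regime $\lambda_i\le a_i$ and the bounds go through up to constants.
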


From this, we immediately obtain the following result, which at once states that the chi-squared test achieves the detection boundary in the dense regime, and does not achieve the detection boundary in the sparse regime.
\begin{cor} \label{cor:chi2}
Consider the testing problem \eqref{problem} with the lower bound \eqref{lambda-lb}.
In the dense regime, where $\beta < 1/2$ in \eqref{eps} and under the parameterization \eqref{parameter-dense}, the  chi-squared test is asymptotically powerful when $s > \rho_{\rm dense}(\beta)$ defined in \eqref{lower_den}.
In the sparse regime, where $\beta > 1/2$ in \eqref{eps} and under the parameterization \eqref{parameter-sparse}, the chi-squared test is asymptotically powerless when $r$ is constant.
\end{cor}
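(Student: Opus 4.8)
The plan is to specialize \prpref{chi-squared} to the two parameterizations. The key simplification is that, under both \eqref{parameter-dense} and \eqref{parameter-sparse}, the quantity $a_i = \Delta_i^2/\lambda_i$ does not depend on $i$, so every sum appearing in \eqref{chi2-1} and \eqref{chi2-2} is a power of $n$ times a power of $\log n$, and the verification reduces to comparing exponents.

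In the dense regime, \eqref{parameter-dense} gives $a_i = (n^s\sqrt{\lambda_i})^2/\lambda_i = n^{2s}$, hence $\sum_i a_i = n^{1+2s}$ and $\sum_i a_i^2 = n^{1+4s}$. With $\eps = n^{-\beta}$ from \eqref{eps}, the first requirement in \eqref{chi2-1} becomes $n^{1-\beta+2s} \gg n^{1/2}$, equivalently $s > \beta/2 - 1/4 = \rho_{\rm dense}(\beta)$ from \eqref{lower_den}, while the second becomes $n^{2-\beta+4s} \gg n^{1+4s}$, equivalently $\beta < 1$, which holds throughout the dense regime. Hence \prpref{chi-squared} shows that the chi-squared test is asymptotically powerful whenever $s > \rho_{\rm dense}(\beta)$.

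In the sparse regime, \eqref{parameter-sparse} gives $a_i = 2r\log n$, so $\sum_i a_i^k = (2r)^k\, n\, (\log n)^k$. Since $\beta > 1/2$ forces $1 - \beta < 1/2$, the three conditions in \eqref{chi2-2} become $2r\, n^{1-\beta}\log n \ll n^{1/2}$, $4r^2\, n^{1-\beta}(\log n)^2 = o(n)$, and $16 r^4\, n^{1-\beta}(\log n)^4 = o(n^2)$, all of which are immediate. So \prpref{chi-squared} shows that the chi-squared test is asymptotically powerless for any fixed $r$.

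Because the whole argument is a substitution followed by elementary comparisons of exponents, there is essentially no obstacle; the only point worth double-checking is that the bookkeeping in the dense regime recovers \emph{exactly} the threshold $\rho_{\rm dense}(\beta)$ and that the second inequality in \eqref{chi2-1} imposes no additional constraint on $s$. Combined with \prpref{lower_den}, this identifies the chi-squared detection boundary in the dense regime.
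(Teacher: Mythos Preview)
Your proof is correct and follows the same approach as the paper, which simply states that the corollary is immediate from \prpref{chi-squared}. You have faithfully verified the conditions \eqref{chi2-1} and \eqref{chi2-2} under the two parameterizations, and the exponent comparisons are all accurate.
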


Other classical goodness-of-tests include the (generalized) likelihood ratio $G^2$ test and the Freeman-Tukey test.
Adapted to our context, the likelihood ratio $G^2$ test rejects for large values of 
\beq \label{likelihood-stats}
G^2 = 2 \sum_{i=1}^n X_i \log \bigg( \frac{X_i}{\lambda_i} \bigg),
\eeq
while the Freeman-Tukey test rejects large values of 
\beq \label{hellinger-stats}
H^2 = 4 \sum_{i=1}^n (\sqrt{X_i} - \sqrt{\lambda_i})^2.
\eeq
We did not investigate these tests in detail, but partial work suggests that they are (as expected) equivalent to the chi-squared in the regimes we are most interested in.

\subsection{The max test}
\label{sec:max}

In analogy with the normal model, we consider the max test which rejects large values of 
\beq \label{max}
M = \max_{i=1, \dots, n} |Z_i|,
\eeq
where the $Z_i$'s are defined in \eqref{Z}.

\begin{prp} \label{prp:max-sparse}
Consider the testing problem \eqref{problem}, parameterized by \eqref{eps} and \eqref{parameter-sparse} with \eqref{normal_cond}.
When $r > (1  - \sqrt{1 - \beta})^2$, the max test is asymptotically powerful.
\end{prp}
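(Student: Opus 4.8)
The plan is a two-sided first-moment argument against a fixed threshold sitting just above the null level of the maximum. First observe that the hypothesis $r > (1-\sqrt{1-\beta})^2$ is equivalent to $\sqrt r + \sqrt{1-\beta} > 1$, i.e.\ to $(\sqrt r + \sqrt{1-\beta})^2 > 1$, so we may fix $q$ with
\[
1 < q < (\sqrt r + \sqrt{1-\beta})^2,
\]
and (to show the max test is asymptotically powerful) exhibit the critical value $t_n := \sqrt{2q\log n}$ and prove that $\P_{H_0}(M \ge t_n) \to 0$ and $\P_{H_1}(M \ge t_n) \to 1$. Note that \eqref{normal_cond} gives a sequence $\omega_n \to \infty$ with $\lambda_i \ge \omega_n \log n$ for all $i$, so all the deviation estimates below will have $o(1)$ corrections uniform in $i$.

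For the null bound I would use the union bound, $\P_{H_0}(M \ge t_n) \le \sum_{i=1}^n \P(|Z_i| \ge t_n) = \sum_i \P\big(|\Pois(\lambda_i) - \lambda_i| \ge t_n\sqrt{\lambda_i}\big)$, and then a Chernoff estimate on the Poisson tails: writing $h(x) = (1+x)\log(1+x) - x = \tfrac{x^2}{2}(1+o(1))$ as $x\to 0$ and applying it at $x = t_n/\sqrt{\lambda_i} = O(1/\sqrt{\omega_n}) \to 0$, one gets $\P(\Pois(\lambda_i) \ge \lambda_i + t_n\sqrt{\lambda_i}) \le \exp(-\lambda_i h(t_n/\sqrt{\lambda_i})) = \exp(-\tfrac{t_n^2}{2}(1+o(1))) = n^{-q+o(1)}$ uniformly, and the lower Poisson tail is even lighter. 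Hence $\P_{H_0}(M \ge t_n) \le n^{\,1-q+o(1)} \to 0$ since $q>1$.

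For the alternative, with probability $\eps/2$ one has $X_i \sim \Pois(\lambda_i')$ with $\lambda_i' = \lambda_i + \sqrt{2r\log n}\sqrt{\lambda_i}$, so $Z_i$ has mean $\sqrt{2r\log n}$ and variance $\lambda_i'/\lambda_i = 1 + \sqrt{2r\log n/\lambda_i} = 1+o(1)$; thus $Z_i$ is approximately $\cN(\sqrt{2r\log n},\,1+o(1))$. The event $\{Z_i \ge t_n\}$ is the upper tail of $\Pois(\lambda_i')$ at $\sqrt{2\log n}(\sqrt q-\sqrt r)(1+o(1))$ standardized units above its mean, and a matching \emph{lower} bound on the Poisson upper tail (Bahadur--Rao type, again expanding $h$, and summing over an $O(\sqrt{\lambda_i'})$-window of point masses to absorb the polynomial prefactor into $n^{-o(1)}$) gives $\P(Z_i \ge t_n) \ge n^{-(\sqrt q-\sqrt r)^2 + o(1)}$ uniformly in $i$ — here $q>1>r$ guarantees $\sqrt q - \sqrt r > 0$. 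By independence of the $X_i$ and $1-x \le e^{-x}$,
\[
\P_{H_1}(M < t_n) \;\le\; \prod_{i=1}^n \P_{H_1}(|Z_i| < t_n) \;\le\; \exp\!\Big(-\sum_{i=1}^n \P_{H_1}(|Z_i|\ge t_n)\Big) \;\le\; \exp\!\Big(-\tfrac{\eps}{2}\sum_{i=1}^n \P\big(\Pois(\lambda_i') \ge \lambda_i + t_n\sqrt{\lambda_i}\big)\Big),
\]
and the exponent is at least $\tfrac12\, n^{\,1-\beta}\cdot n^{-(\sqrt q-\sqrt r)^2 + o(1)} = \tfrac12\, n^{\,1-\beta-(\sqrt q-\sqrt r)^2+o(1)} \to \infty$, because the choice of $q$ forces $(\sqrt q-\sqrt r)^2 < 1-\beta$. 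Therefore $\P_{H_1}(M < t_n) \to 0$, and together with the null bound the max test with critical value $t_n$ is asymptotically powerful.

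The bookkeeping with $t_n$ is routine; the one step that needs genuine care is the uniform two-sided Poisson tail estimate at the scale $t \asymp \sqrt{\log n}$, namely that $\P(\Pois(\mu) \ge \mu + t\sqrt\mu) = n^{-t^2/(2\log n)\,(1+o(1))}$ with $o(1)$ uniform over $\mu \ge \omega_n\log n$. The upper bound is a direct Chernoff computation; the matching lower bound requires an exponential change of measure (equivalently a sharp large-deviation/local-limit estimate for the Poisson law), and checking that its subexponential prefactor is $n^{-o(1)}$ no matter how large $\mu$ is. This is exactly the moderate-deviation control that condition \eqref{normal_cond} is designed to supply, and it is what makes the Gaussian approximation $Z_i \approx \cN(\cdot, 1)$ rigorous in both the null and the up-shifted components.
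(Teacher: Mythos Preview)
Your argument is correct and matches the paper's proof in structure: pick a threshold $t_n=\sqrt{2q\log n}$ with $q$ just above $1$, control the null by the union bound plus a moderate-deviation Poisson tail estimate (the paper's \lemref{tail-bound}), and control the alternative by showing that $n\eps$ times the per-coordinate exceedance probability $n^{-(\sqrt q-\sqrt r)^2+o(1)}$ diverges. The only cosmetic difference is that the paper conditions on the random set $I'=\{i:X_i\sim\Pois(\lambda_i')\}$ and uses $|I'|\ge n\eps/4$ w.h.p., whereas you work directly with the mixture marginals via $\prod_i(1-p_i)\le\exp(-\sum_i p_i)$; both routes yield the same exponent $1-\beta-(\sqrt q-\sqrt r)^2$. (One minor remark: the lower Poisson tail is not literally ``lighter'' at this scale---both tails are $n^{-q+o(1)}$ by \lemref{tail-bound}---but this does not affect your union bound.)
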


Hence, the max test achieves the detection boundary \eqref{lower} in the very sparse regime where $\beta \in (3/4, 1)$.  
We speculate that, just as in the normal model, the max test does not achieve the detection boundary when $\beta < 3/4$.

\subsection{The higher criticism test}

In the normal model, \cite{dj04} advocate a test based on the normalized empirical process of the $Z_i$'s.  In our case, these variables are not identically distributed.  It would make sense to convert these to P-values, then, and we will comment on that in \secref{multiple-testing}.
For now, we opt for the following definition
\beq \label{hc}
T^\star = \sup_{z \in \cZ_n} T(z), \quad T(z) := \frac{\sum_{i} \big(\IND{|Z_i| >z} - K_{\lambda_i}(z)\big)}{\sqrt{\sum_i K_{\lambda_i}(z) (1- K_{\lambda_i}(z))}},
\eeq
where 
\[
K_\lambda(z) := \P\big(|\Upsilon_\lambda - \lambda|/\sqrt{\lambda} > z\big), \quad \cZ_n = \big\{z \in \bbN : \textstyle\sum_i K_{\lambda_i}(z) (1- K_{\lambda_i}(z)) \ge \log n\big\}.
\]
We consider the higher criticism test rejects for large values of $T^\star$.
This definition extends the higher criticism of \cite{dj04}, in particular the variant ${\rm HC}+$, to the case where the test statistics are not identically distributed under the null --- and cannot be transformed to be so.
The discretization of the supremum makes the control under the null particularly simple.

\begin{prp} \label{prp:hc-sparse}
Consider the testing problem \eqref{problem}, parameterized by \eqref{eps} and \eqref{parameter-sparse} with \eqref{normal_cond}.
When $r > \rho_{\rm sparse}(\beta)$, the higher criticism test is asymptotically powerful.
\end{prp}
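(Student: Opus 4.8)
The plan is to show that under the alternative, the higher criticism statistic $T^\star$ is driven to infinity at a single well-chosen threshold $z = z_n$, while under the null it stays bounded (in fact $O(\sqrt{\log\log n})$-ish, but a crude bound suffices because we only need the risk to go to zero). The argument is the standard Donoho--Jin decomposition adapted to the non-i.i.d.\ Poisson setting, and the new technical content is entirely in replacing normal tail asymptotics by \emph{moderate deviation} asymptotics for the Poisson, which is licensed by \eqref{normal_cond}.

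First I would dispose of the null. Because $\cZ_n$ is a set of integers and for each $z \in \cZ_n$ the denominator $\sigma_n(z)^2 := \sum_i K_{\lambda_i}(z)(1-K_{\lambda_i}(z)) \ge \log n$, the numerator $\sum_i(\IND{|Z_i|>z} - K_{\lambda_i}(z))$ is a sum of independent centered bounded variables with variance $\sigma_n(z)^2$; Bernstein's inequality gives $\P_0(T(z) > t) \le \exp(-ct^2)$ for $t$ up to order $\sigma_n(z)$, and since $|\cZ_n| \le \max_i (\lambda_i + \text{something}) $ is at most polynomial in $n$ (using $|Z_i| \le z$ forces $z \lesssim \sqrt{\lambda_i} + X_i/\sqrt{\lambda_i}$, and the relevant $z$'s are $O(\sqrt{\log n})$ since beyond that all $K_{\lambda_i}(z)$ are negligible), a union bound over $z \in \cZ_n$ shows $T^\star = O_{\P_0}(\sqrt{\log n})$. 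So it is enough to exhibit one $z_n \in \cZ_n$ with $T(z_n) \to \infty$ in probability under $H_1$ faster than $\sqrt{\log n}$.

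Next, the alternative. Fix $z_n = \lfloor \sqrt{2q\log n}\rfloor$ where $q \in (0,1)$ is the usual Donoho--Jin optimizer: $q = \beta - 1/2$ is a valid choice when $1/2<\beta\le 3/4$, and for $3/4<\beta<1$ one takes $q$ slightly below $r$ (this is where $r > \rho_{\rm sparse}(\beta)$ enters, via $q>(1-\sqrt{1-\beta})^2 \Rightarrow 2\sqrt q - q > \beta$... more precisely one needs the signal count at threshold $z_n$ to dominate the null standard deviation). Write $\E_1[\IND{|Z_i|>z_n}] = (1-\eps)K_{\lambda_i}(z_n) + \tfrac\eps2 \P(|\Upsilon_{\lambda_i'} - \lambda_i|/\sqrt{\lambda_i} > z_n) + \tfrac\eps2(\text{term for }\lambda_i'')$. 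Under \eqref{parameter-sparse} the shifted mean $\lambda_i' = \lambda_i(1 + \sqrt{2r\log n/\lambda_i})$, so $(\Upsilon_{\lambda_i'} - \lambda_i)/\sqrt{\lambda_i}$ concentrates near $\sqrt{2r\log n}$; by the moderate deviation estimate (valid since $\lambda_i \gg \log n$, so all the relevant deviations are in the moderate-deviation window $o(\lambda_i^{1/2})$ in standardized units — here I would invoke exactly the Poisson-to-normal tail comparison the authors announce using "standard moderate deviations bounds") one gets $\P(|\Upsilon_{\lambda_i'}-\lambda_i|/\sqrt{\lambda_i} > z_n) \ge \bar\Phi\big((z_n - \sqrt{2r\log n})(1+o(1))\big)(1+o(1)) = n^{-(\sqrt q - \sqrt r)^2 + o(1)}$, so the expected excess count is $\gtrsim \eps n \cdot n^{-(\sqrt q-\sqrt r)^2} = n^{1-\beta-(\sqrt q-\sqrt r)^2+o(1)}$, while $K_{\lambda_i}(z_n) = n^{-q+o(1)}$ uniformly in $i$ gives $\sigma_n(z_n) = \sqrt{\sum_i K_{\lambda_i}(z_n)(1-K_{\lambda_i}(z_n))} = n^{(1-q)/2 + o(1)}$ and also confirms $z_n \in \cZ_n$. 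Then
\[
\E_1[T(z_n)] \gtrsim \frac{n^{1-\beta-(\sqrt q-\sqrt r)^2}}{n^{(1-q)/2}} = n^{\,1/2 - \beta + q/2 - (\sqrt q - \sqrt r)^2 + o(1)},
\]
and choosing $q$ to maximize $1/2 - \beta + q/2 - (\sqrt q - \sqrt r)^2$ reproduces exactly the inequality $r > \rho_{\rm sparse}(\beta)$ as the condition that this exponent is positive (for $\beta \le 3/4$ the maximizer is $q=r$ and the exponent is $r - (\beta - 1/2)$; for $\beta>3/4$ the constrained optimizer $q \uparrow \min(r,\ldots)$ yields the $(1-\sqrt{1-\beta})^2$ threshold). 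A second-moment / variance bound on $T(z_n)$ under $H_1$ — the numerator is again a sum of independent bounded variables, so its variance is at most its own mean plus $\eps\sum_i(\cdots) \lesssim \sigma_n(z_n)^2 \cdot n^{o(1)}$ — then shows $T(z_n) \to \infty$ in $\P_1$-probability, and combined with the null bound this gives asymptotic power.

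The main obstacle is the uniform moderate-deviation control: one must show that $\P(|\Upsilon_{\lambda}-\lambda|/\sqrt\lambda > z) = \bar\Phi(z)\,n^{o(1)}$ and, crucially, the analogous statement for the shifted Poisson $\Upsilon_{\lambda'}$, \emph{uniformly over all $\lambda \ge$ (the common lower order of magnitude)} with only the hypothesis $\min_i\lambda_i \gg \log n$ and $z = O(\sqrt{\log n})$. This is genuinely a moderate-deviation regime (the standardized deviation $z/\sqrt\lambda \to 0$), so classical results apply, but making the $o(1)$ uniform in $i$ — and checking that the lower tail for $\lambda_i''$ (possibly $0$ or small) does not spoil things — is the delicate accounting; everything else is bookkeeping that parallels Donoho--Jin.
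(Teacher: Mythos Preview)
Your approach is essentially the paper's: Bernstein plus a union bound over $\cZ_n$ for the null (the paper shows $|\cZ_n|\le\sqrt{3\log n}$ via \lemref{chernoff}, getting $\P_0(T^\star>2\sqrt{\log n})=o(1)$), then under the alternative pick $z_n(q)=\lfloor\sqrt{2q\log n}\rfloor$, use the moderate-deviation \lemref{tail-bound} to get $K_{\lambda_i}(z_n(q))=n^{-q+o(1)}$ and $K_{\lambda'_i}(z_n(q))=K_{\lambda''_i}(z_n(q))=n^{-(\sqrt q-\sqrt r)^2+o(1)}$ uniformly in $i$, and finish with Chebyshev on $T(z_n(q))$.

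There is, however, a computational slip in your choice of $q$. You write that for $\beta\le 3/4$ ``the maximizer is $q=r$ and the exponent is $r-(\beta-1/2)$''. But at $q=r$ the exponent $\tfrac12-\beta+\tfrac q2-(\sqrt q-\sqrt r)^2$ equals $\tfrac r2-(\beta-\tfrac12)$, which is \emph{not} positive on all of $r>\rho_{\rm sparse}(\beta)$ (it fails for $\beta-\tfrac12<r<2\beta-1$). Differentiating in $\sqrt q$ gives the true unconstrained maximizer $q=4r$, at which the exponent is indeed $r-(\beta-\tfrac12)$. This forces the case split to be in $r$, not in $\beta$: the paper takes $q=4r$ when $r<1/4$ (so that $q<1$ and $z_n(q)\in\cZ_n$), and when $r\ge 1/4$ takes any $q\in(r,1)$ with $1-\beta-(\sqrt q-\sqrt r)^2>0$, which exists because $r\ge 1/4$ together with $r>\rho_{\rm sparse}(\beta)$ forces $r>(1-\sqrt{1-\beta})^2$. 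Your earlier suggestion $q=\beta-1/2$ does not do the job either.

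Finally, your concern about $\lambda_i''$ being ``possibly $0$ or small'' is unwarranted here: under \eqref{normal_cond} and \eqref{parameter-sparse} one has $\Delta_i/\lambda_i=\sqrt{2r\log n/\lambda_i}\to 0$, so $\lambda_i''\sim\lambda_i\gg\log n$ and the moderate-deviation lemma applies symmetrically to both tails.
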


We speculate that, just as in the normal model, the higher criticism is also able to achieve the detection boundary in the dense regime.

\subsection{Multiple testing: Fisher, Bonferroni and Tukey}
\label{sec:multiple-testing}

We now take a multiple testing perspective.  
In multiple testing jargon, our null hypothesis $H_0$ is the {\em complete null}, since 
\[
H_0 = \bigcap_{i=1}^n H_{0, i}.
\]
Several possible definitions for P-values are possible here.  We define the P-value for the $i$th hypothesis testing problem as follows
\beq \label{pval}
p_i = G_{\lambda_i}(X_i), \quad \text{where} \quad 
G_\lambda(x) := \P(|\Upsilon_\lambda -\lambda| \ge |x - \lambda|).
\eeq
There does not seem to be a consensus on the definition of P-value for asymmetric discrete null distributions \citep{dunne1996two}.  
We speculate that any reasonable definition leads to the same asymptotic results in our context.
We note that the $p_i$'s are independent, but they are discrete, and therefore not uniformly distributed in $(0,1)$ under the complete null. 
In fact, they are not even identically distributed unless the $\lambda_i$'s are all equal.  That said, for each $i$, the null distribution of $p_i$ stochastically dominates the uniform distribution.

\begin{lem} \label{lem:pval}
\citep[Lem 3.3.1]{TSH}
For any $\lambda > 0$, 
\[
\P(G_{\lambda}(\Upsilon_\lambda) \le u) \le u, \quad \forall u \in (0,1).
\]
\end{lem}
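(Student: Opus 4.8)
The plan is to prove the general fact behind the lemma: for \emph{any} real-valued random variable $Y$ with survival function $S(t) := \P(Y \ge t)$, one has $\P\big(S(Y) \le u\big) \le u$ for every $u \in (0,1)$. The lemma then follows at once by specializing to $Y = |\Upsilon_\lambda - \lambda|$, since by definition $G_\lambda(\Upsilon_\lambda) = \P\big(|\Upsilon_\lambda' - \lambda| \ge |\Upsilon_\lambda - \lambda|\big)\big|_{\Upsilon_\lambda'\ \text{fresh}} = S(|\Upsilon_\lambda - \lambda|)$, i.e.\ $G_\lambda(\Upsilon_\lambda) = S(Y)$, where $S$ is the survival function of $Y$ (which is nonincreasing and left-continuous).

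Fix $u \in (0,1)$ and set $y_u := \inf\{t \ge 0 : S(t) \le u\}$. This set is nonempty because $S(t) \to 0$ as $t \to \infty$, and since $S$ is nonincreasing it is an interval unbounded to the right, so $y_u$ is well defined. I would then split into two cases according to whether the infimum is attained. If $S(y_u) \le u$, monotonicity of $S$ gives the exact identity of events $\{S(Y) \le u\} = \{Y \ge y_u\}$, whence $\P\big(S(Y) \le u\big) = S(y_u) \le u$. If instead $S(y_u) > u$, then again by monotonicity $\{S(Y) \le u\} \subseteq \{Y > y_u\}$, and since $S(t) \le u$ for every $t > y_u$ we obtain $\P\big(S(Y) \le u\big) \le \P(Y > y_u) = \lim_{t \downarrow y_u} S(t) \le u$. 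In either case the bound holds, and applying it with $Y = |\Upsilon_\lambda - \lambda|$ completes the proof.

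The argument is short enough that there is no real obstacle; the only point needing care is the discreteness of $Y$ (equivalently, the atoms of the Poisson law): $G_\lambda(\Upsilon_\lambda)$ takes only countably many values, so $\{G_\lambda(\Upsilon_\lambda) \le u\}$ will typically have probability strictly below $u$, which is precisely why the conclusion is an inequality. Distinguishing whether the infimum defining $y_u$ is attained is exactly what lets the bound go through cleanly across these jumps. Alternatively, one may simply invoke \citep[Lemma 3.3.1]{TSH} verbatim, since it is stated there in this generality.
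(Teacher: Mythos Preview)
Your argument is correct. The paper does not actually give a proof of this lemma: it simply cites \citep[Lem 3.3.1]{TSH} and moves on, so there is nothing in the paper to compare your argument against beyond that reference. Your approach---reducing to the general fact that $\P(S(Y)\le u)\le u$ for the survival function $S$ of any random variable $Y$, and handling the discreteness via the dichotomy on whether the infimum $y_u$ is attained---is exactly the standard proof of this fact, and is essentially what one finds in Lehmann--Romano. The identification $G_\lambda(\Upsilon_\lambda)=S(|\Upsilon_\lambda-\lambda|)$ is correct given the paper's definition of $G_\lambda$. One very minor remark: in Case~2 you write $\{S(Y)\le u\}\subseteq\{Y>y_u\}$, but in fact equality holds there as well (the level set $\{t:S(t)\le u\}$ is exactly $(y_u,\infty)$ in that case); of course the inclusion already suffices for the bound.
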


With P-values now defined, we can draw from the literature on multiple comparisons and make correspondences with the tests that we studied in the previous sections.
 
\subsubsection*{Fisher's method}
The chi-squared test is, in our context, intimately related to multiple testing with Fisher's method, which rejects the complete null for large values of 
\beq \label{fisher}
- 2\sum_{i=1}^n \log p_i.
\eeq
We speculate that, like Pearson's chi-squared test, Fisher's method achieves the detection boundary in the dense regime.  We were able to prove it in the simpler one-sided setting.  Details are postponed to \secref{one-sided}.

\subsubsection*{Bonferroni's method}
The max test is, in turn, intimately related to multiple testing with Bonferroni's method, which rejects the (complete) null for small values of 
\[
\min_{i=1, \dots, n} p_i.
\]
In fact, the two procedures are identical when the $\lambda_i$'s are all equal.
One can show that \prpref{max-sparse} applies to the Bonferroni test also.  Instead of formally proving this, we focus on complementing the lower bound established in \prpref{small_lb}.

\begin{prp} \label{prp:bonferroni}
Consider the testing problem \eqref{problem} with parameterizations \eqref{eps} with $\beta > 1/2$ and \eqref{lambda_small} with \eqref{small}.  When $\gamma > \beta$, the Bonferroni test is asymptotically powerful.
\end{prp}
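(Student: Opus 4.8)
The plan is to run the Bonferroni test that rejects $H_0$ when $\min_i p_i \le t_n$, with threshold $t_n := n^{-(1+\eta)}$ for a small constant $\eta = \eta(\beta,\gamma) > 0$ to be fixed at the end. Type~I error is controlled for free: by \lemref{pval} and the union bound, $\P_{H_0}(\min_i p_i \le t_n) \le \sum_i \P_{H_0}(p_i \le t_n) \le n t_n = n^{-\eta} \to 0$. For power, let $\xi_i \in \{0,1,2\}$ indicate which component of \eqref{model} produced $X_i$, so $\P(\xi_i = 1) = \eps/2$ and, given $\xi_i = 1$, $X_i \sim \Pois(\lambda_i')$, with the pairs $(\xi_i, X_i)$ independent over $i$. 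Since $G_{\lambda_i}$ is nonincreasing on $[\lambda_i,\infty)$ and $G_{\lambda_i}(x) \ge \P(\Upsilon_{\lambda_i} = 0) = e^{-\lambda_i} = n^{-o(1)} > t_n$ for all integers $0 \le x \le 2\lambda_i$, the integer $x_i^\star := \min\{x : G_{\lambda_i}(x) \le t_n\}$ satisfies $x_i^\star > 2\lambda_i$ and $\{Y \ge x_i^\star\} \subseteq \{G_{\lambda_i}(Y) \le t_n\}$. Putting $q_i := \P(\Upsilon_{\lambda_i'} \ge x_i^\star)$ and $N := \sum_i \IND{\xi_i = 1}\,\IND{p_i \le t_n}$, we get $\E N \ge \tfrac{\eps}{2}\sum_i q_i$, and since $N$ is a sum of independent indicators, $\Var N \le \E N$, so $\P(N = 0) \le 1/\E N$ by Chebyshev. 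Thus the test is asymptotically powerful as soon as $\eps \sum_i q_i \to \infty$, and because $\eps = n^{-\beta}$ and the sum has $n$ terms, it suffices to establish $q_i \ge n^{-(1-\beta)+c}$ for a fixed $c > 0$, uniformly in $i$.

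The crux is this uniform lower bound on $q_i$, which calls for large-deviation (not normal) control of Poisson tails since $\lambda_i \ll \log n$. Note $\lambda_i' = \lambda_i^{1-\gamma}(\log n)^\gamma \ge (\log n)^{\min(\gamma,1)} \to \infty$, and that $x_i^\star = o(\log n)$ uniformly (a routine consequence of large deviations, as $t_n$ is only polynomially small and $\lambda_i = o(\log n)$). If $x_i^\star \le \lambda_i'$ — which is automatic when $\gamma \ge 1$, since then $\lambda_i' \ge \log n$ — then $q_i \ge \P(\Upsilon_{\lambda_i'} \ge \lambda_i') \ge 1/3$ for $n$ large; if $\lambda_i' < x_i^\star \le 2\lambda_i'$, which forces $\gamma < 1$ and hence $\lambda_i' = o(\log n)$, then $q_i \ge \P(\Upsilon_{\lambda_i'} \ge 2\lambda_i') = n^{-o(1)}$. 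The substantive case is $x_i^\star > 2\lambda_i' > 2\lambda_i$, which again forces $\gamma < 1$. Writing $I_\mu(x) := x\log(x/\mu) - x + \mu$ for the Poisson rate function, the Chernoff bound together with a single-term/Stirling estimate gives $-\log\P(\Upsilon_\mu \ge x) = I_\mu(x) + O(\log x)$ for $x \ge 2\mu$, while $G_{\lambda_i}(x) = \P(\Upsilon_{\lambda_i} \ge x)$ in this range. The defining relations $G_{\lambda_i}(x_i^\star) \le t_n < G_{\lambda_i}(x_i^\star - 1)$ then give $I_{\lambda_i}(x_i^\star) = (1+\eta)\log n\,(1 + o(1))$, and combined with the elementary inequalities $x_i^\star > \lambda_i(\log n/\lambda_i)^\gamma$ and $x_i^\star\log(x_i^\star/(e\lambda_i)) \le I_{\lambda_i}(x_i^\star)$ they sandwich $\log(x_i^\star/\lambda_i)$ between $\gamma\log(\log n/\lambda_i)$ and $\log(\log n/\lambda_i) + O(1)$. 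Feeding this sandwich, together with $\lambda_i' = o(\log n)$, into the identity $I_{\lambda_i'}(x) - I_{\lambda_i}(x) = -\,x\,\gamma\log(\log n/\lambda_i) + (\lambda_i' - \lambda_i)$ at $x = x_i^\star$, one obtains $I_{\lambda_i'}(x_i^\star) \le \big[(1+\eta)(1-\gamma) + o(1)\big]\log n$. Since $\gamma > \beta$, we may fix $\eta > 0$ with $(1+\eta)(1-\gamma) < 1 - \beta$, and then $q_i \ge e^{-I_{\lambda_i'}(x_i^\star) - O(\log x_i^\star)} \ge n^{-(1-\beta)+c}$ for a fixed $c > 0$, uniformly in $i$. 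Hence $\eps\sum_i q_i \ge n^c \to \infty$, as needed.

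The step I expect to be the main obstacle is the uniform, two-sided estimate of $\log(x_i^\star/\lambda_i)$ across the full range $\lambda_i \in [1, o(\log n)]$: it is exactly this sandwich, pushed through the identity for $I_{\lambda_i'} - I_{\lambda_i}$, that drives the exponent of $q_i$ down to $(1-\gamma)\log n$ and so makes the detection threshold land precisely at $\gamma = \beta$. The remaining pieces — the monotonicity of $G_{\lambda_i}$, the bookkeeping $x_i^\star > 2\lambda_i$ and $x_i^\star = o(\log n)$, and the Stirling estimates for the Poisson mass function — are routine.
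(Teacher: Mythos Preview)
Your proof is correct and follows the same strategy as the paper: control the null via \lemref{pval} and a union bound, and under the alternative show $q_i = \P(\Upsilon_{\lambda_i'} \ge x_i^\star) \ge n^{-(1-\gamma)+o(1)}$ uniformly via large-deviation (Chernoff/Stirling) bounds for the Poisson tail. The paper works with the explicit threshold $b_i = \lambda_i h^{-1}\!\big(\tfrac{\log(n/\omega_n)}{\lambda_i}\big) \sim (\log n)/\log(\log n/\lambda_i)$ and computes $\log q_i \sim -b_i\log(b_i/\lambda_i') \sim -(1-\gamma)\log n$ directly, whereas you leave $x_i^\star$ implicit and reach the same exponent through the identity $I_{\lambda_i'}(x)-I_{\lambda_i}(x) = -\gamma\, x\log(\log n/\lambda_i) + (\lambda_i'-\lambda_i)$; these are the same computation organized differently.
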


We note that the same is true if we merely focus on the large $Z_i$'s, meaning, if we replace the two-sided P-values $p_i$ with 
\beq \label{pval1}
p^{\rm one}_i = G^{\rm one}_{\lambda_i}(X_i), \quad \text{where} \quad G^{\rm one}_{\lambda}(x) := \P(\Upsilon_\lambda \ge x).
\eeq
In fact, one cannot exploit the assumption that $\lambda_i'' = 0$ for all $i$.  Indeed, if we consider the test that rejects for large values of $Y := \# \{i : X_i = 0\}$, it is asymptotically powerless.  This follows from an application of \lemref{basic}.  By a simple application of Lyapunov's central limit theorem and \eqref{lambda-lb}, $Y$ is asymptotically normal both under the null and the alternative.  Moreover,  
\[\E_0(Y) = \sum_i e^{-\lambda_i}, \quad \Var_0(Y) = \sum_i e^{-\lambda_i} (1 - e^{-\lambda_i}) \ge (1-e^{-1}) n e^{-\max_i \lambda_i} = n^{1 + o(1)},\] 
where we used \eqref{lambda-lb} and \eqref{small},
while 
\[\E_1(Y) = \sum_i \Big((1-\eps) e^{-\lambda_i} + \frac\eps2 e^{-\lambda_i'} + \frac\eps2\Big) \le (1-\eps/2) \E_0(Y) + n \eps/2 \le \E_0(Y) + n^{1-\beta},\] 
and, after some simple calculations using \eqref{lambda-lb}, 
\[\Var_0(Y) \le \Var_1(Y) \le 
(1-\eps/2)^2 \Var_0(Y) + n \eps/2 \le \Var_0(Y) + n^{1-\beta}.\]
We can easily check that the conditions of \lemref{basic} are satisfied when $\beta > 1/2$.

\subsubsection*{Tukey's higher criticism}
This brings us back to the higher criticism, which is some sense is an intermediate method between Fisher's and Bonferroni's methods.  
\cite{dj04} attribute to Tukey the idea of testing the complete null based on the maximum of the normalized empirical process of the P-values, which equivalently leads to rejecting for larges values of
\beq \label{hc0}
\max_{1 \le i \le n/2} \frac{\sqrt{n} \, (i/n - p_{(i)})} {\sqrt{p_{(i)} ( 1 -  p_{(i)})}},
\eeq
where $p_{(1)} \le \cdots \le p_{(n)}$ are the sorted P-values.  In our context where the P-values are close to, but not exactly uniformly distributed, we can show that the test based on \eqref{hc0} achieves the detection boundary when all the $\lambda_i$'s are equal.  
(Details are omitted.)
When this is not so, we are not able to conclude that this is still the case.

\section{Simulations} \label{sec:sim}

We present the result of some numerical experiments whose purpose is to see the behavior of the various tests in finite samples.  So the asymptotic analysis is relevant, we chose to work with $n = 10^4$ and $n = 10^6$.  In some bioinformatics/genetics applications, $n$ could be in the millions.  We compare the tests in terms of their power when the level is controlled at $\alpha = 0.05$ by simulation.  (We generate the test statistic 500 times under the null and take the $(1 - \alpha)$-quantile as the critical value.)  The power against a particular alternative is then obtained empirically from 200 repeats.

We note that, for the higher criticism, we work with the P-values defined in \eqref{pval} and their corresponding null distribution $F_i (t) := \P(G_{\lambda_i}(\Upsilon_{\lambda_i}) \le t)$, that is,
\beq \label{hc-sim}
{\rm HC} = \max_{t \in T} \frac{\sum_{i=1}^n (\IND{p_i \le t} - F_i(t)) }{\sqrt{\sum_{i=1}^n F_i(t) (1 - F_i(t))}},
\eeq 
where $T := \{t \in (0,1) : 1/n \le F_i(t) \le 1/2,  i=1, \dots, n\}$.
We note that \eqref{hc-sim} is a generalized form of Tukey's higher criticism \eqref{hc0} for the case where $p_i$'s are not identically distributed.  Thus we find \eqref{hc-sim} more natural than \eqref{hc}, but the two are very closely related and the latter is more easily amenable to mathematical analysis.  
In practice, we estimate $F_i$ by simulation.

\subsection{In the dense regime}

In the dense regime, we have \eqref{eps} with $\beta \in (0, 1/2)$ and the parameterization \eqref{param_general} with \eqref{parameter-dense}.

In the first set of experiments, we investigate how the test performance matches the theoretical information boundary \eqref{lower_den}.  We set $n = 10^6$, all the $\lambda_i$'s equal to $\lambda_0 = 15 > \log(n)\approx 14$, and vary $\beta$ in the range of $(0, 0.5)$ with 0.025 increments and $s$ in the range of $[-0.5, 0]$ with 0.025 increments. 
When the $\lambda_i$'s are all equal, Bonferroni's method is equivalent to the max test, and is therefore omitted.  
The results are summarized in \figref{dense_fixed_lambda}. 
We see that the phase transition phenomenon is clear.
We can see the performance of the chi-squared test and Fisher's method are similar and comparable with the higher criticism, and achieve the asymptotic detection boundary. 
As expected, the max test has hardly any power in the dense regime.
We note that very similar trends are observed in the normal means model.

In the second set of experiments, we generate settings where the $\lambda_i$'s are different.  We take $n=10^4$ and fix $\beta=0.2$, and the $\lambda_i$'s are generated iid from $\lambda_0 + {\rm Exp}(\lambda_0)$, where ${\rm Exp}(\lambda)$ denotes the exponential distribution with mean $\lambda$, and we let $\lambda_0 \in \{1, 10, 100 \}$.
The results are summarized in \figref{dense_fixed_beta}.   
We can see the chi-squared test and Fisher's method perform similarly and are the best, closely followed by the higher criticism.  The max test and the Bonferroni's method perform similarly and poorly, as expected.
The effect of $\lambda_0$ does not seem important.

\begin{figure}[h!]
\centering
\includegraphics[scale=.6]{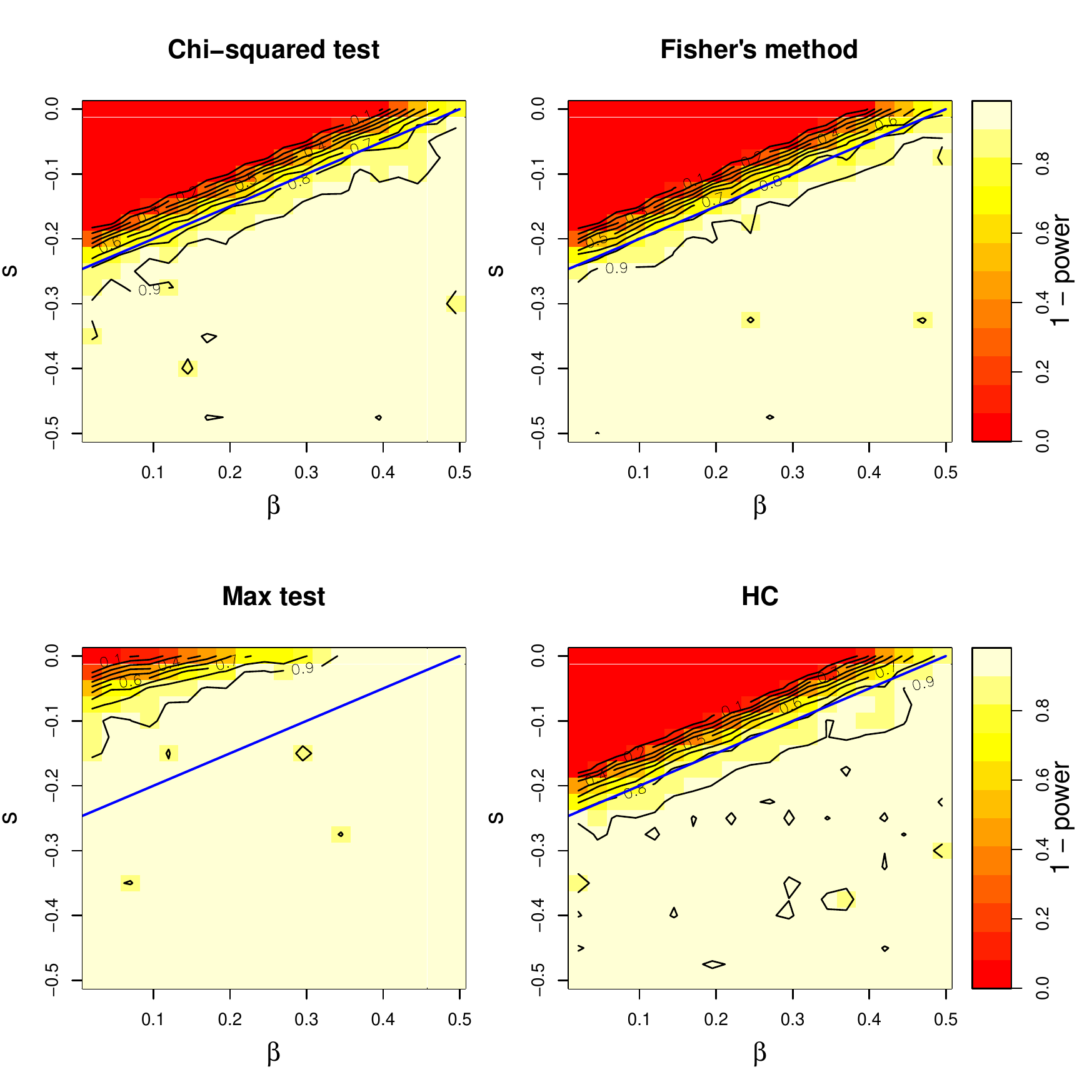}
\vspace*{-.2in}
\caption{Simulation results in the dense regime, with $n = 10^6$ and all $\lambda_i$'s equal to $\lambda_0= 15$.  The blue line is the information boundary \eqref{lower_den}.}
\label{fig:dense_fixed_lambda}
\end{figure}

\begin{figure}[h!]
\centering
\includegraphics[scale=.55]{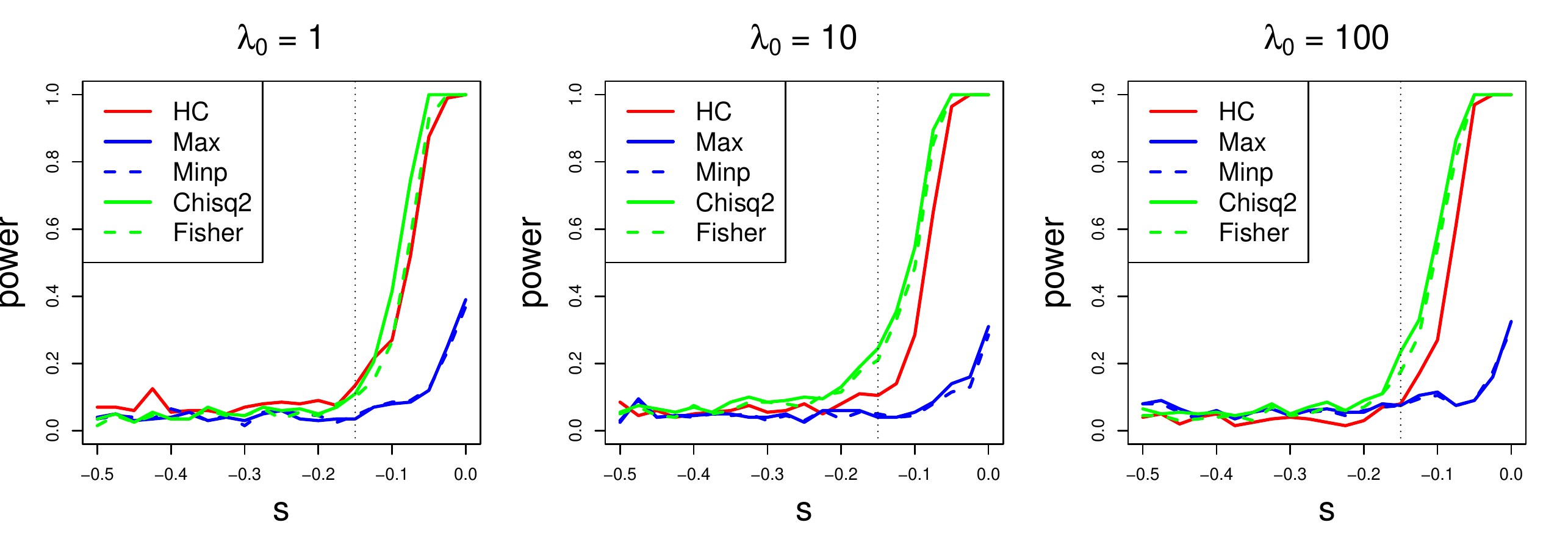}
\vspace*{-.2in}
\caption{Simulation results in the dense regime, with $n = 10^4$, $\beta = 0.2$, and the $\lambda_i$'s generated iid from $\lambda_0 + {\rm Exp}(\lambda_0)$.  The vertical dotted line is the detection threshold.
}
\label{fig:dense_fixed_beta}
\end{figure}

\newpage

\subsection{In the sparse regime}
In the sparse regime, we have \eqref{eps} with $\beta \in (1/2, 1)$ and the parameterization \eqref{param_general} with \eqref{parameter-sparse}.  The experiments are otherwise parallel to those performed in the dense regime.

In the first set of experiments, we set $n = 10^6$, means all equal to $\lambda_0 = 15$, and vary $\beta$ in the range $[0.5, 1]$ with increments of 0.025, and $r$ in the range $[0, 1]$ with increments of 0.05.
The results are summarized in \figref{sparse_fixed_lambda}.
While the chi-squared test is not competitive, as expected, we can see that the higher criticism has more power in the moderately sparse regime where $\beta \in (0.5, 0.75)$, while the max test is clearly the best in the very sparse regime where $\beta \in (0.75, 1)$.  The asymptotic detection boundary is seen to be fairly accurate, although less so as $\beta$ approaches 1, where the asymptotics take longer to come into effect.  (For example, when $n = 10^6$ and $\beta = 0.9$, there are only $n^{1-0.9} \approx 4$ anomalies.)
We note that very similar trends are observed in the normal means model.

In the second set of experiments, we set $n = 10^4$ and $\beta = 0.6$ (moderately sparse) or $\beta = 0.8$ (very sparse), and the $\lambda_i$'s are generated iid from $\lambda_0 + {\rm Exp}(\lambda_0)$, where $\lambda_0 \in \{1, 10, 100 \}$. 
The simulation results are reported in \figref{sparse_fixed_beta1} and \figref{sparse_fixed_beta3}.
We can see that the max test and Bonferroni's method perform similarly, and dominate in the very sparse regime.  
The chi-squared test is somewhat better than Fisher's method, and in some measure competitive in the moderately sparse regime, but essentially powerless in the very sparse regime.  
The higher criticism is the clear winner in the moderately sparse regime, as expected, and holds its own in the very sparse regime, although clearly inferior to the max test.
Comparing the results for different $\lambda_0$, we may conclude that, in the sparse regime, smaller counts (i.e., small $\lambda_0$) make the problem more difficult --- at least in this finite sample setting.

\begin{figure}[h!]
\centering
\includegraphics[scale=.65]{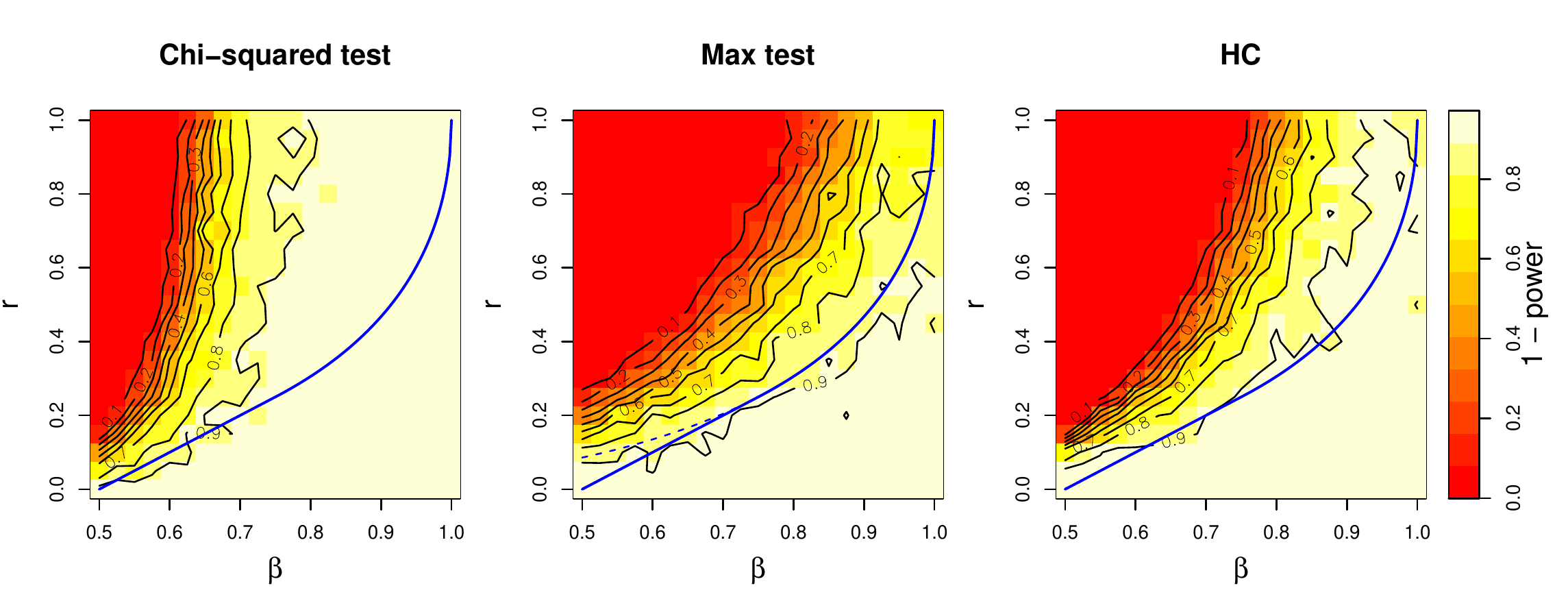}
\caption{
Simulation results in the sparse regime, with $n = 10^6$ and all $\lambda_i$'s equal to $\lambda_0= 15$.  The blue line is the information boundary \eqref{lower}. The dashed blue curve for the max test is the boundary that it can achieve.}
\label{fig:sparse_fixed_lambda}
\end{figure}

\begin{figure}[h!]
\centering
\includegraphics[scale=.55]{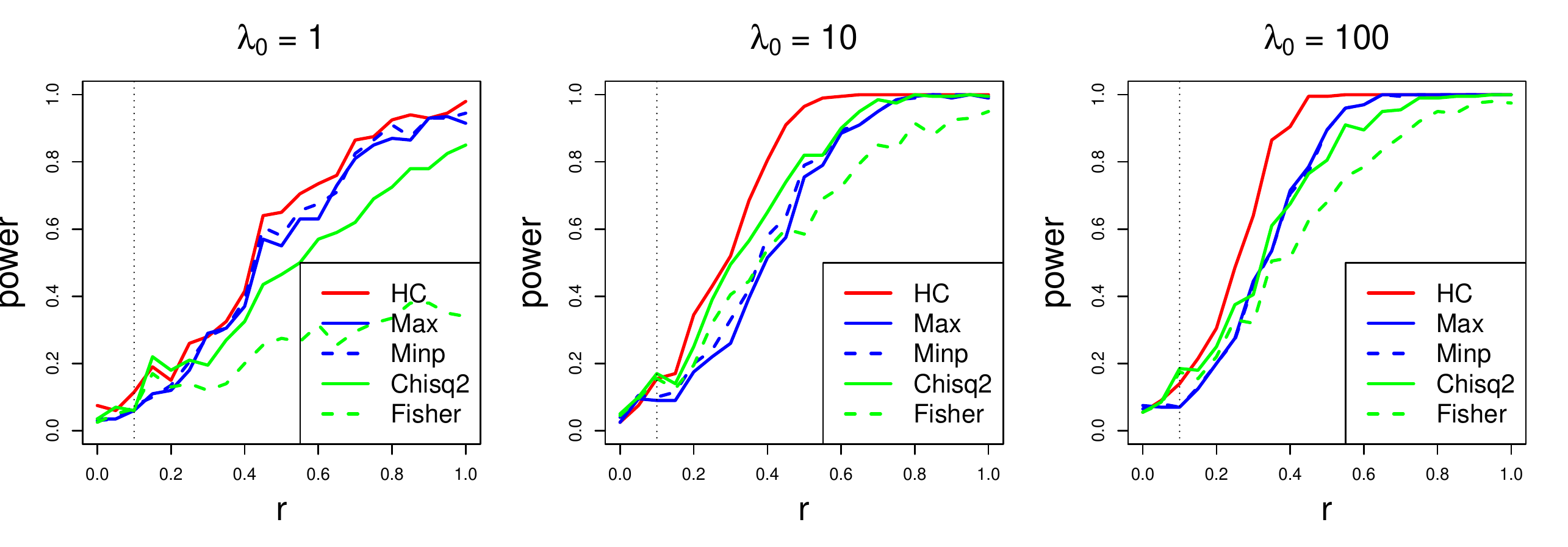}
\vspace*{-0.2in}
\caption{
Simulation results in the moderately sparse regime, with $n = 10^4$, $\beta = 0.6$, and the $\lambda_i$'s generated iid from $\lambda_0 + {\rm Exp}(\lambda_0)$.  The vertical dotted line is the detection threshold.}
\label{fig:sparse_fixed_beta1}
\end{figure}

\begin{figure}[h!]
\centering
\includegraphics[scale=.55]{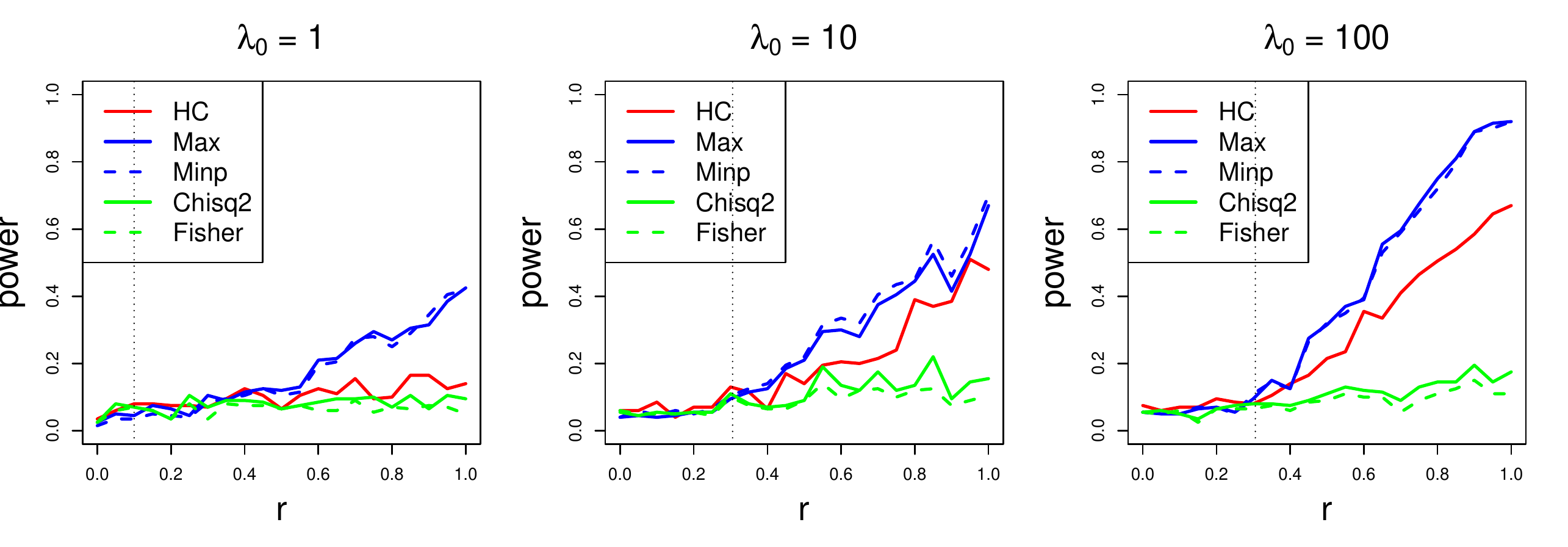}
\caption{ 
Simulation results in the very sparse regime, with $n = 10^4$, $\beta = 0.8$, and the $\lambda_i$'s generated iid from $\lambda_0 + {\rm Exp}(\lambda_0)$.  The vertical dotted line is the detection threshold.
}
\label{fig:sparse_fixed_beta3}
\end{figure}

\section{Proofs}
\label{sec:proofs}

For $a,b \in \bbR$, let $a \wedge b = \min(a,b)$ and $a \vee b = \max(a,b)$.
For two sequences of reals $(a_n)$ and $(b_n)$: $a_n \sim b_n$ when $a_n/b_n \to 1$; $a_n = o(b_n)$ when $a_n/b_n \to 0$; $a_n = O(b_n)$ when $a_n/b_n$ is bounded; $a_n \asymp b_n$ when $a_n = O(b_n)$ and $b_n = O(a_n)$; $a_n \ll b_n$ when $a_n = o(b_n)$.  Finally, $a_n \approx b_n$ when $|a_n/b_n| \vee |b_n/a_n| = O(\log n)^w$ for some $w \in \bbR$.
We use similar notation with a superscript $P$ when the sequences $(a_n)$ and $(b_n)$ are random.  In particular, $a_n = O_P(b_n)$ means that $a_n/b_n$ is bounded in probability, i.e., $\sup_{n} \P(|a_n/b_n| > x) \to 0$ as $x \to \infty$, and $a_n = o_P(b_n)$ means that $a_n/b_n \to 0$ in probability.

When $X$ and $Y$ are random variables, $X \sim Y$ means they have the same distribution.  
For a random variable $X$ and distribution $F$, $X \sim F$ means that $X$ has distribution $F$.
For a sequence of random variables $(X_n)$ and a distribution $F$, $X_n \rightharpoonup F$ means that $X_n$ converges in distribution to $F$.
Everywhere, we identify a distribution and its cumulative distribution function.
For a distribution $F$, $\bar{F}(x) = 1 - F(x)$ will denote its survival function.
We say that an event $E_n$ hold with high probability (w.h.p.) if $\P(E_n) \to 1$ as $n \to \infty$.

We let $\P_0, \E_0, \Var_0$ (resp.~$\P_{0,i}, \E_{0,i}, \Var_{0,i}$) and $\P_1, \E_1, \Var_1$ (resp.~$\P_{1,i}, \E_{1,i}, \Var_{1,i}$) denote the probability, expectation and variance under the null (resp.~null at observation $i$) and alternative (resp.~alternative at observation $i$), respectively. 
Recall that $\Upsilon_\lambda$ denotes a random variable with the Poisson distribution with mean $\lambda$, denoted $P_{\lambda}$, so that for a set $A$, $P_\lambda(A) = P(\Upsilon_\lambda \in A)$.

\subsection{Preliminaries}

We state here a few results that will be used later on in the proofs of the main results stated earlier in the paper.
We start with a couple of facts about the Poisson distribution.

The following are moderate deviation bounds for the Poisson distribution ${\rm Pois}(\lambda)$ as $\lambda \to \infty$.
\begin{lem} \label{lem:tail-bound}
Let $a : (0, \infty) \to (0, \infty)$ be such that $a(\lambda) \to \infty$ and $a(\lambda)/\lambda \to 0$ as $\lambda \to \infty$.  Then
\[
\lim_{\lambda \to \infty} \frac1{a(\lambda)} \log \P\Big(\Upsilon_\lambda \ge \lambda + \sqrt{\lambda a(\lambda)}\Big) = - \frac12
\]
and
\[
\lim_{\lambda \to \infty}  \frac1{a(\lambda)} \log \P\Big(\Upsilon_\lambda \le \lambda - \sqrt{\lambda a(\lambda)}\Big)  = - \frac12.
\]
\end{lem}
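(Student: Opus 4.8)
This is a moderate-deviations statement, and the plan is to prove matching upper and lower bounds for $\frac1{a(\lambda)}\log\P(\cdot)$, carrying out the upper-tail claim in detail and noting that the lower-tail claim follows by an entirely parallel computation. Throughout write $x = x(\lambda) := \sqrt{\lambda\,a(\lambda)}$ and $\delta = \delta(\lambda) := x/\lambda = \sqrt{a(\lambda)/\lambda}$, so that $\delta \to 0$ while $\lambda\delta^2 = a(\lambda)$ and $\lambda\delta^3 = a(\lambda)\delta = o(a(\lambda))$.

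For the upper bound on the probability I would use the Chernoff (exponential Markov) inequality together with the Poisson moment generating function $\E e^{t\Upsilon_\lambda} = e^{\lambda(e^t-1)}$, at the optimal tilt $t = \log(1+\delta) > 0$. Since $e^t - 1 = \delta$ and $x = \lambda\delta$, this gives
\[
\log\P\big(\Upsilon_\lambda \ge \lambda + x\big) \;\le\; \lambda\big[\delta - (1+\delta)\log(1+\delta)\big].
\]
Taylor expanding $(1+\delta)\log(1+\delta) = \delta + \delta^2/2 + O(\delta^3)$ turns the right-hand side into $-\lambda\delta^2/2 + O(\lambda\delta^3) = -a(\lambda)/2 + o(a(\lambda))$, so dividing by $a(\lambda)$ yields $\limsup_{\lambda\to\infty}\frac1{a(\lambda)}\log\P(\Upsilon_\lambda \ge \lambda + x) \le -1/2$.

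For the matching lower bound I would \emph{not} estimate a single point mass (that loses a factor $\tfrac12\log\lambda$ in the exponent, which is harmless only under the stronger hypothesis $a(\lambda)\gg\log\lambda$), but instead sum the Poisson p.m.f.\ over a window. Set $k_0 = \lceil \lambda + x\rceil$ and $m = \lfloor 1/\delta\rfloor$. Using $\P(\Upsilon_\lambda = k_0 + j) = \P(\Upsilon_\lambda = k_0)\prod_{i=1}^{j}\lambda/(k_0+i)$ and the crude estimate $\lambda/(k_0+i) \ge (1+2\delta)^{-1}$ for $1 \le i \le m$ and $\lambda$ large (note $m/\lambda \le \delta$, hence $(x+m)/\lambda \le 2\delta$), together with $(1+2\delta)^{-m} \ge e^{-2m\delta} \ge e^{-2}$, one gets
\[
\P\big(\Upsilon_\lambda \ge \lambda + x\big) \;\ge\; \sum_{j=0}^{m-1}\P(\Upsilon_\lambda = k_0+j) \;\ge\; m\,e^{-2}\,\P(\Upsilon_\lambda = k_0).
\]
Stirling's formula gives $\log\P(\Upsilon_\lambda = k_0) = \lambda[\delta - (1+\delta)\log(1+\delta)] - \tfrac12\log(2\pi k_0) + O(1) = -a(\lambda)/2 + o(a(\lambda)) - \tfrac12\log\lambda + O(1)$ (the $O(1)$ rounding in $k_0 = \lceil\lambda+x\rceil$ perturbs the exponent by only $O(\delta) = o(1)$). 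Since $\log m = \tfrac12\log\lambda - \tfrac12\log a(\lambda) + O(1)$, the two $\tfrac12\log\lambda$ terms cancel and
\[
\log\P\big(\Upsilon_\lambda \ge \lambda + x\big) \;\ge\; -\frac{a(\lambda)}2 + o(a(\lambda)) - \frac12\log a(\lambda) + O(1).
\]
Dividing by $a(\lambda)$ and using $\log a(\lambda) = o(a(\lambda))$ (valid whenever $a(\lambda)\to\infty$) gives $\liminf \ge -1/2$, which closes the upper-tail claim. The lower-tail claim is handled identically: Chernoff with the tilt $t = \log(1-\delta) < 0$ produces the bound $\lambda[-\delta - (1-\delta)\log(1-\delta)] = -a(\lambda)/2 + o(a(\lambda))$, and summing the p.m.f.\ over the window $\{\lceil\lambda - x\rceil - m + 1, \dots, \lceil\lambda - x\rceil\}$ supplies the matching lower bound.

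\textbf{Main obstacle.} The delicate point is the lower bound. A naive one-term estimate $\P(\Upsilon_\lambda \ge \lambda + x) \ge \P(\Upsilon_\lambda = k_0)$ leaks a $\tfrac12\log\lambda$ in the exponent, which is negligible against $a(\lambda)$ only if $a(\lambda)\gg\log\lambda$. Summing over a window of $\lfloor 1/\delta\rfloor$ consecutive values recovers precisely a $\tfrac12\log\lambda$ factor, leaving only an $O(\log a(\lambda))$ residual and hence working for \emph{all} $a(\lambda)\to\infty$; getting the window length and the bound on the ratio of consecutive p.m.f.\ values right is where the care goes.
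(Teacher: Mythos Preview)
Your proof is correct and self-contained, but it follows a genuinely different route from the paper's. The paper does not compute directly: it writes $\Upsilon_\lambda$ as (stochastically bounded by) a sum of $m=\lceil\lambda\rceil$ i.i.d.\ ${\rm Pois}(1)$ variables plus one extra term, applies the moderate-deviations theorem for i.i.d.\ sums from Dembo--Zeitouni \cite[Th.~3.7.1]{MR1619036} to the bulk, and checks that the extra Poisson(1) term is negligible; an $\varepsilon$-of-room argument then closes the limit. Your approach instead does everything by hand: Chernoff at the optimal tilt for the upper bound, and for the lower bound a windowed sum of $\lfloor 1/\delta\rfloor$ consecutive point masses whose $\tfrac12\log\lambda$ contribution exactly cancels the Stirling loss, leaving only an $O(\log a(\lambda))$ residual. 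Your version is more elementary (no external moderate-deviations citation) and makes explicit why the naive one-term lower bound would fail for slowly growing $a(\lambda)$; the paper's version is shorter to write once one accepts the Dembo--Zeitouni input. One cosmetic remark: your inequality $\lambda/(k_0+i)\ge (1+2\delta)^{-1}$ relies on $m \le x$, equivalently $a(\lambda)\ge 1$, and on absorbing the $+1$ from the ceiling; for full rigor you may want to replace $2\delta$ by $(2+o(1))\delta$ or simply by $3\delta$, which changes only the harmless constant $e^{-2}$.
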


\begin{proof}
We focus on the first statement.  Let $m = [\lambda]$ and take $Y_1, \dots, Y_{m+1}$ iid Poisson with mean 1.  Fixing $\eps \in (0,1)$, we have
\[
\P\Big(\Upsilon_\lambda \ge \lambda + \sqrt{\lambda a(\lambda)}\Big) 
\le \P\Big( \sum_{i=1}^{m} Y_i + Y_{m+1} \ge m + \sqrt{m a(\lambda)}\Big)
\le {\rm I} + {\rm II},
\] 
where
\[
{\rm I} := \P\Big(\sum_{i=1}^m (Y_i-1) \ge (1 -\eps) \sqrt{m a(\lambda)}\Big),
\quad {\rm II} := \P\Big(Y_{m+1} \ge \eps \sqrt{m a(\lambda)}\Big),
\]
where in the first inequality we used the fact that $\Upsilon_\lambda$ is stochastically bounded from above by $\sum_{i=1}^{m+1} Y_i$, and in the second inequality we used the union bound.
By \citep[Th 3.7.1]{MR1619036},
\[
\frac1{a(\lambda)} \log {\rm I} \to - \frac{(1-\eps)^2}2, \quad m \to \infty.
\]
And using the fact that $\P(\Upsilon_1 \ge x)/\P(\Upsilon_1 = x) \to 1$ as $x \to \infty$, we have
\[
\log {\rm II} = \log \P\Big(\Upsilon_1 = [\eps \sqrt{m a(\lambda)}]\Big) + o(1) \sim - \eps \sqrt{m a(\lambda)} \log \sqrt{m a(\lambda)}, \quad m \to \infty.
\]
Since $a(\lambda) = o(m)$, we have that ${\rm II} = o({\rm I})$, and conclude that
\[
\limsup_{\lambda \to \infty} \frac1{a(\lambda)} \log \P\Big(\Upsilon_\lambda \ge \lambda + \sqrt{\lambda a(\lambda)}\Big) \le - \frac{(1-\eps)^2}2,
\]
and because $\eps > 0$ is arbitrary, we may take $\eps = 0$ in this last display.  The reverse inequality is proved similarly.
\end{proof}

The following are concentration bounds for the Poisson distribution.
For a real $x$, let $\lceil x \rceil$ (resp.~$\lfloor x \rfloor$) denote the smallest (resp.~largest) integer greater (resp.~smaller) than or equal to $x$.
\begin{lem} \label{lem:chernoff}
For $x \ge 0$, define $h(x) = x \log(x) - x + 1$, with $h(0) = 0$.  Then, for any $\lambda > 0$,
\[
- \lambda h(\lceil x \rceil/\lambda) - \tfrac12 \log \lceil x \rceil - 1 \le \log \P\Big(\Upsilon_\lambda \ge x\Big) \le - \lambda h(x/\lambda), \quad \forall x \ge \lambda,
\]
and
\[
- \lambda h(\lfloor x \rfloor/\lambda) - \tfrac12 \log \lfloor x \rfloor - 1 \le \log \P\Big(\Upsilon_\lambda \le x\Big) \le - \lambda h(x/\lambda), \quad \forall \, 0 \le x \le \lambda.
\]
\end{lem}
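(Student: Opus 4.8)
The plan is to prove the two upper bounds by the classical Chernoff (exponential moment) argument, and the two lower bounds by retaining a single atom of the Poisson probability mass function and applying an explicit Stirling inequality.

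\emph{Upper bounds.} Fix $x \ge \lambda$. For any $t \ge 0$, Markov's inequality applied to $e^{t \Upsilon_\lambda}$ together with the Poisson moment generating function $\E e^{t \Upsilon_\lambda} = e^{\lambda(e^t - 1)}$ gives
\[
\P(\Upsilon_\lambda \ge x) \le e^{-tx} \, e^{\lambda(e^t - 1)}.
\]
Choosing $t = \log(x/\lambda)$, which is nonnegative precisely because $x \ge \lambda$, the exponent becomes $-x\log(x/\lambda) + x - \lambda$, and since $\lambda h(x/\lambda) = x\log(x/\lambda) - x + \lambda$, this is exactly $-\lambda h(x/\lambda)$, proving the upper bound on the right tail. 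The upper bound on the left tail is obtained symmetrically: for $0 < x \le \lambda$ apply Markov to $e^{t\Upsilon_\lambda}$ with $t = \log(x/\lambda) \le 0$, and the same computation yields $\P(\Upsilon_\lambda \le x) \le e^{-\lambda h(x/\lambda)}$; the case $x = 0$ is handled directly since $\P(\Upsilon_\lambda \le 0) = e^{-\lambda} = e^{-\lambda h(0)}$. (As a sanity check, $h$ attains its minimum $0$ at $1$, so these upper bounds are always $\le 0$.)

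\emph{Lower bounds.} For $x \ge \lambda > 0$ we have $\lceil x \rceil \ge 1$, and dropping all but the atom at $\lceil x \rceil$,
\[
\P(\Upsilon_\lambda \ge x) \ge \P(\Upsilon_\lambda = \lceil x \rceil) = e^{-\lambda} \frac{\lambda^{\lceil x \rceil}}{\lceil x \rceil !}.
\]
Using the explicit Stirling bound $k! \le e \, k^{k+1/2} e^{-k}$, valid for every integer $k \ge 1$, and taking logarithms,
\[
\log \P(\Upsilon_\lambda \ge x) \ge -\lambda + \lceil x \rceil \log \lambda - \lceil x \rceil \log \lceil x \rceil + \lceil x \rceil - \tfrac12 \log \lceil x \rceil - 1,
\]
and the first four terms on the right equal $-\lambda h(\lceil x \rceil / \lambda)$, which is the claim. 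The left-tail lower bound is the mirror image, using $\P(\Upsilon_\lambda \le x) \ge \P(\Upsilon_\lambda = \lfloor x \rfloor)$ and the same Stirling step; here one needs $\lfloor x \rfloor \ge 1$, which holds whenever $x \ge 1$, as is the case in all the applications in this paper given the standing assumption $\lambda_i \ge 1$ (and if $\lfloor x \rfloor = 0$ the bound is anyway read off directly from $\P(\Upsilon_\lambda \le x) = e^{-\lambda}$).

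\textbf{Main obstacle.} There is essentially none: the argument is entirely standard. The only points requiring a bit of care are (i) selecting a clean explicit form of Stirling's inequality so that the additive loss in the reverse bound comes out exactly as $\tfrac12 \log \lceil x \rceil + 1$ (the constant $e$ in $k! \le e\, k^{k+1/2}e^{-k}$ being convenient because it holds with equality at $k=1$), and (ii) tracking the ceiling/floor so that the relevant integer is $\ge 1$ before Stirling is invoked and the bound remains valid at the boundary.
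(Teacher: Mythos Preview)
Your proof is correct and follows essentially the same route as the paper: Chernoff's exponential-moment bound for the upper inequalities, and retaining the single atom at $\lceil x\rceil$ (respectively $\lfloor x\rfloor$) together with the Stirling bound $m!\le m^{m+1/2}e^{-m+1}$ for the lower inequalities. Your write-up is in fact slightly more careful about the boundary cases ($x=0$, $\lfloor x\rfloor=0$) than the paper's own proof.
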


\begin{proof}
The upper bounds result from a straightforward application of Chernoff's bound.  For the first lower bound, take $x \ge \lambda$ and let $m = \lceil x \rceil$.  Then 
\[\log \P\Big(\Upsilon_\lambda \ge x\Big) 
\ge \log \P\Big(\Upsilon_\lambda = m \Big) = \log \Big(e^{-\lambda} \frac{\lambda^{m}}{ m !}\Big) 
\ge - \lambda h(m/\lambda) - \log m - 1 
\]
using the fact that $m! \le m^{m+1/2} e^{-m+1}$.  The second lower bound is proved similarly.
\end{proof}

The following is Berry-Esseen's theorem applied to the Poisson distribution ${\rm Pois}(\lambda)$ as $\lambda \to \infty$.
\begin{lem} \label{lem:berry-bound}
There is a universal constant $C > 0$ such that
\[
\sup_{x \in \bbR} \Bigg|\P \Bigg(\frac{\Upsilon_{\lambda} - \lambda}{\sqrt{\lambda}} \le x \Bigg) - \Phi(x)\Bigg| \le C/\sqrt{\lambda}.
\]
\end{lem}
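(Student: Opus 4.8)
The plan is to reduce to the classical Berry--Esseen theorem for sums of independent random variables. Fix $\lambda > 0$ and set $n = \lceil \lambda \rceil$, so that $\delta := \lambda / n \in (0,1]$. Since the Poisson family is closed under convolution, $\Upsilon_\lambda \stackrel{d}{=} V_1 + \cdots + V_n$ where $V_1, \dots, V_n$ are iid $\mathrm{Pois}(\delta)$, and then $\E V_i = \Var V_i = \delta$, so that $\sum_{i=1}^n \Var V_i = n\delta = \lambda$ and $(\Upsilon_\lambda - \lambda)/\sqrt{\lambda}$ is precisely the standardized version of $\sum_i V_i$.

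Next I would invoke the Berry--Esseen inequality: there is a universal constant $C_0$ such that, for independent mean-zero random variables $W_1, \dots, W_n$ with $\sigma^2 := \sum_i \E W_i^2 \in (0,\infty)$,
\[
\sup_{x \in \bbR} \Big| \P\big( \tfrac{1}{\sigma} \textstyle\sum_{i=1}^n W_i \le x \big) - \Phi(x) \Big| \le \frac{C_0}{\sigma^3} \sum_{i=1}^n \E |W_i|^3 .
\]
Applying this with $W_i = V_i - \delta$ and $\sigma^2 = \lambda$ yields an upper bound of $C_0 \lambda^{-3/2} \cdot n \, \E|V_1 - \delta|^3$.

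It then remains to show that $\E |V_1 - \delta|^3 \le C_1 \delta$ for every $\delta \in (0,1]$, with $C_1$ absolute. This is elementary: the $V_1 = 0$ atom contributes $\delta^3 e^{-\delta} \le \delta$, while for $V_1 = k \ge 1$ one bounds $|k - \delta|^3 \le (k+1)^3$ and sums $\sum_{k \ge 1} (k+1)^3 e^{-\delta} \delta^k / k! \le \delta \sum_{k \ge 1} (k+1)^3 / k!$, using $\delta \le 1$; the latter series is a finite absolute constant. Combining, $n \, \E|V_1 - \delta|^3 \le C_1 n \delta = C_1 \lambda$, so the displayed bound becomes $C_0 C_1 \lambda^{-3/2} \cdot \lambda = C_0 C_1 / \sqrt{\lambda}$, which is the claim with $C = C_0 C_1$.

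I do not anticipate a genuine obstacle here; the only point requiring a little care is uniformity in $\lambda$, and in particular covering non-integer $\lambda$. That is exactly why one decomposes $\Upsilon_\lambda$ into $n = \lceil \lambda \rceil$ equal pieces of mean $\delta \le 1$ (rather than applying the iid Berry--Esseen bound directly to $\mathrm{Pois}(1)$ summands, which would only handle integer $\lambda$), and why one needs the absolute bound $\E|V_1 - \delta|^3 \le C_1 \delta$ valid for all small $\delta$.
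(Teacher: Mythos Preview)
Your proof is correct and follows essentially the same route as the paper: decompose $\Upsilon_\lambda$ as a sum of $\lceil \lambda \rceil$ iid $\mathrm{Pois}(\lambda/\lceil\lambda\rceil)$ variables and apply the classical Berry--Esseen theorem. The only cosmetic difference is that the paper first observes it suffices to treat $\lambda \ge 1$ (since for $\lambda < 1$ the bound holds trivially once $C \ge 1$), which confines $\delta$ to $[1/2,1]$ and lets them bound the third absolute moment by a constant directly, whereas you establish the sharper $\E|V_1-\delta|^3 \le C_1\delta$ for all $\delta \in (0,1]$.
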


\begin{proof}
Let $m = \lceil \lambda \rceil$ be the smallest integer greater than or equal to $\lambda$.
It is enough to prove the result when $\lambda \ge 1$, in which case $1/2 \le \lambda/m \le 1$.
Take $Y_1, \dots, Y_m$ are iid $\Pois(\lambda/m)$, so that $\Upsilon_\lambda \sim \sum_{i=1}^m Y_i$. We have $\E(Y_i) = \Var(Y_i) = \lambda/m$ and $\E (|Y_i - \lambda/m|^3) \le \E(\Upsilon_1^3) < \infty$. The result now follows by the Berry-Esseen theorem.
\end{proof}

The following lemma is standard, and appears for example in \citep{arias2013distribution}.

\begin{lem}  \label{lem:basic}
Consider a test that rejects for large values of a statistic $T_n$ with finite second moment, both under the null and alternative hypotheses.  Then the test that rejects when $T_n \ge t_n := \E_0(T_n) + \frac{a_n}2 \sqrt{\Var_0(T_n)}$ is asymptotically powerful if
\beq \label{basic1}
a_n := \frac{\E_1(T_n) - \E_0(T_n)}{\sqrt{\Var_1(T_n) \vee \Var_0(T_n)}} \to \infty.
\eeq 
Assume in addition that $T_n$ is asymptotically normal, both under the null and alternative hypotheses.  Then the test is asymptotically powerless if  
\beq \label{basic2}
\frac{\E_1(T_n) - \E_0(T_n)}{\sqrt{\Var_0(T_n)}} \to 0 \quad \text{ and } \quad \frac{\Var_1(T_n)}{\Var_0(T_n)} \to 1.
\eeq 
\end{lem}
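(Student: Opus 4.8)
The plan is to treat the two parts separately; both reduce quickly to Chebyshev's inequality, with the second part additionally using the fact that weak convergence to a continuous limit is uniform (P\'olya's theorem). For the powerful direction I would bound the two error probabilities of the test with critical value $t_n = \E_0(T_n) + \tfrac{a_n}{2}\sqrt{\Var_0(T_n)}$ directly. Chebyshev under $\P_0$ gives a type I error $\P_0(T_n \ge t_n) \le 4/a_n^2 \to 0$. For the type II error the only point to check is that $t_n$ sits well below $\E_1(T_n)$: since $\E_1(T_n) - \E_0(T_n) = a_n\sqrt{\Var_1(T_n) \vee \Var_0(T_n)}$ and $\sqrt{\Var_0(T_n)} \le \sqrt{\Var_1(T_n) \vee \Var_0(T_n)}$, one has $\E_1(T_n) - t_n \ge \tfrac{a_n}{2}\sqrt{\Var_1(T_n) \vee \Var_0(T_n)} \ge \tfrac{a_n}{2}\sqrt{\Var_1(T_n)}$, so Chebyshev under $\P_1$ gives $\P_1(T_n < t_n) \le 4/a_n^2 \to 0$. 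Adding the two bounds shows the risk tends to $0$.

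For the powerless direction I would show that, under the two conditions in \eqref{basic2}, the centered-and-scaled statistic has the \emph{same} Gaussian limit under $\P_0$ and under $\P_1$, so that no threshold based on $T_n$ can separate the two hypotheses. Set $\tilde T_n := (T_n - \E_0(T_n))/\sqrt{\Var_0(T_n)}$; by hypothesis $\tilde T_n \rightharpoonup \cN(0,1)$ under $\P_0$. Under $\P_1$ I would decompose
\[
\tilde T_n = \frac{\sqrt{\Var_1(T_n)}}{\sqrt{\Var_0(T_n)}}\cdot\frac{T_n - \E_1(T_n)}{\sqrt{\Var_1(T_n)}} + \frac{\E_1(T_n) - \E_0(T_n)}{\sqrt{\Var_0(T_n)}},
\]
where the ratio of standard deviations tends to $1$ (variance condition), the middle factor converges weakly to $\cN(0,1)$ (asymptotic normality under $\P_1$), and the additive term tends to $0$ (mean condition); Slutsky's theorem then gives $\tilde T_n \rightharpoonup \cN(0,1)$ under $\P_1$ as well. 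Since $\Phi$ is continuous, P\'olya's theorem makes both convergences uniform in the argument, so the Kolmogorov distance $d_n := \sup_t |\P_0(T_n \le t) - \P_1(T_n \le t)|$ tends to $0$. Finally, for any critical value $t$ the risk of ``reject iff $T_n \ge t$'' equals $1 - \big(\P_1(T_n \ge t) - \P_0(T_n \ge t)\big) \ge 1 - d_n$, which tends to $1$ uniformly over thresholds; hence the test is asymptotically powerless whatever critical value one uses.

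All the estimates are elementary, so I do not expect a real obstacle. The one step deserving a little care is the powerless direction: one must recognize that the two hypotheses in \eqref{basic2} are precisely what is needed to give $\tilde T_n$ a common limit under both hypotheses — the mean condition removes the location shift and the variance-ratio condition removes the scale mismatch — after which P\'olya's theorem finishes the argument. (Throughout one tacitly assumes $\Var_0(T_n), \Var_1(T_n) > 0$, which holds in all our applications.)
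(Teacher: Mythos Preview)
The paper does not actually prove this lemma: it is stated as ``standard'' with a citation to \citep{arias2013distribution} and no argument is given. Your proposal is a correct and complete proof. The Chebyshev argument for the powerful direction is exactly right, including the key observation that $\E_1(T_n)-t_n \ge \tfrac{a_n}{2}\sqrt{\Var_1(T_n)}$. For the powerless direction, the Slutsky/P\'olya route is the natural one; one tiny cosmetic point is that from $\text{risk} \ge 1-d_n$ alone you only get $\liminf \ge 1$, but the matching upper bound $\text{risk} \le 1+d_n$ follows by the same inequality with the roles of $\P_0$ and $\P_1$ reversed, so the risk indeed converges to $1$.
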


Finally, we state without proof the following simple result.
\begin{lem} \label{lem:rho-ineq}
The function $f(\beta) = (1 - \sqrt{1 - \beta})^2 - (\beta - 1/2)$ is nonnegative and strictly increasing on $(3/4 ,1)$.
\end{lem}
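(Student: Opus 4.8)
The plan is to reveal that $f$ is, up to a constant, a perfect square in the natural variable $u := \sqrt{1 - \beta}$, after which both claimed properties become transparent. First I would substitute $u = \sqrt{1-\beta}$, noting that as $\beta$ ranges over $(3/4, 1)$ the quantity $u$ ranges over $(0, 1/2)$, and that $\beta \mapsto u$ is a strictly decreasing bijection onto this interval. Since $\beta = 1 - u^2$, we get
\[
f(\beta) = (1 - u)^2 - \big((1 - u^2) - \tfrac12\big) = 1 - 2u + u^2 - \tfrac12 + u^2 = 2u^2 - 2u + \tfrac12 = 2\big(u - \tfrac12\big)^2 .
\]

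From this closed form, nonnegativity on $(3/4, 1)$ is immediate: $f(\beta) = 2(\sqrt{1-\beta} - 1/2)^2 \ge 0$, with equality only at $\beta = 3/4$ (where $u = 1/2$). For strict monotonicity I would argue directly: on $(3/4, 1)$ we have $u = \sqrt{1-\beta} \in (0, 1/2)$, so $u - 1/2 < 0$; as $\beta$ increases, $u$ strictly decreases, hence $u - 1/2$ strictly decreases (becomes more negative), hence $(u - 1/2)^2$ strictly increases, hence $f$ strictly increases. Alternatively one can simply differentiate: $f'(\beta) = 4(u - \tfrac12)\cdot \frac{d u}{d\beta} = 4(u-\tfrac12)\cdot\big(-\tfrac{1}{2u}\big) = \frac{2(\tfrac12 - u)}{u} = \frac{2\big(\tfrac12 - \sqrt{1-\beta}\big)}{\sqrt{1-\beta}} > 0$ on $(3/4,1)$, since the numerator is positive there.

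There is essentially no obstacle here; the only mild point is recognizing the substitution $u = \sqrt{1-\beta}$ that collapses the expression to a multiple of $(u - 1/2)^2$. Everything else is a one-line algebraic or calculus verification, and the endpoint behavior ($f(3/4) = 0$) confirms that the statement is sharp, i.e., $f$ is strictly positive on the open interval.
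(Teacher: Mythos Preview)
Your proof is correct; the substitution $u=\sqrt{1-\beta}$ reducing $f$ to $2(u-\tfrac12)^2$ makes both claims immediate. The paper itself states this lemma without proof, so there is nothing to compare against.
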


\subsection{Proof of \prpref{lower_den}}
Here we use the second moment method without truncation, which amounts to proving that $\Var_0(L) \to 0$, or equivalently, $\E_0(L^2) \le 1 +o(1)$, where $L$ is the likelihood ratio
\[
L = \prod_{i=1}^n L_i,
\]
where
\beq \label{Li}
L_i := \frac{(1 - \eps) P_{\lambda_i}(X_i) + \frac \eps 2 P_{\lambda_i'}(X_i) + \frac \eps 2 P_{\lambda_i''}(X_i) }{P_{\lambda_i}(X_i)}.
\eeq
We have $\E_0(L^2) = \prod_{i=1}^n \E_0(L_i^2)$, where
\begin{align*}
\E_0(L_i^2) &= \sum_{x=0}^{\infty} \frac{\big[(1 - \eps) P_{\lambda_i}(x) + \frac \eps 2 P_{\lambda_i'}(x) + \frac \eps 2 P_{\lambda_i''}(x)\big]^2}{P_{\lambda_i}(x)} \\
&= \sum_{x = 0}^{\infty} \frac{\big[ (1-\eps) e^{-\lambda_i} \frac{\lambda_i^{x}}{x !} +  \frac{\eps}{2} e^{-\lambda_i'} \frac{\lambda_i'^{x}}{x !} + \frac{\eps}{2} e^{-\lambda_i''} \frac{\lambda_i''^{x}}{x !} \big]^2}{ e^{-\lambda_i} \frac{\lambda_i^{x}}{x !}}  \\
&= (1-\eps)^2 + 2(1-\eps)\eps + \frac{\eps^2}{4} e^{-2 \lambda_i' + \lambda_i + \frac{\lambda_i'^2}{\lambda_i}} + \frac{\eps^2}{4} e^{-2 \lambda_i'' + \lambda_i + \frac{\lambda_i''^2}{\lambda_i}}  + \frac{\eps^2}{2} e^{-\lambda_i' - \lambda_i'' + \lambda_i + \frac{\lambda_i' \lambda_i''}{\lambda_i}} \\
&= 1 + \frac{\eps^2}2 \big[ (e^{n^{2 s}} - 1) + (e^{-n^{2s}} - 1) \big] \\
&= 1 + a_n, \quad \text{where } a_n := \eps^2 \big[ \cosh(n^{2 s}) - 1 \big].
\end{align*}
In the third line we used the fact that $\sum_{x=0}^\infty \lambda^x/x! = e^\lambda$ for all $\lambda \in \bbR$, and in the fourth line we used \eqref{parameter-dense}. 
Condition \eqref{cond-lower-den} and the fact that $\beta < 1/2$ imply that $s < 0$, and a Taylor expansion gives 
$
a_n \le n^{-2\beta +4 s},
$
eventually.
We deduce that $\E_0(L^2) \le (1+a_n)^n$, and the RHS tends to 1 when $n a_n \to 0$, which is the case because of \eqref{cond-lower-den}.

\subsection{Proof of \prpref{lower}}
We use the truncated second moment method of Ingster in the form put forth by \cite{butucea2013detection}.
Define 
\[
x_i = \lambda_i + \sqrt{2 (1+\eta) \log(n)} \sqrt{\lambda_i}, \quad y_i = \lambda_i - \sqrt{2 (1+\eta) \log(n)} \sqrt{\lambda_i},
\]
where $\eta > 0$ is chosen small enough that \eqref{eta-choice} and \eqref{eta-choice2} hold simultaneously.

Define the truncated likelihood function,
\[
\tilde{L} = \prod_{i=1}^n L_i \IND{A_i}, \quad A_i := \{ y_i \le X_i \le x_i \},
\]
where $L_i$ is defined in \eqref{Li}.
As in \cite{butucea2013detection}, it suffices to prove that
\[
\E_0 (\tilde L) \geq 1 + o(1) \quad \text{and} \quad \E_0(\tilde L^2) \leq 1 + o(1).
\]

\noindent 
{\bf First moment.}  We have
\[
\E_0 (\tilde L) = \prod_{i=1}^n \E_0 (L_i \IND{A_i}) = \prod_{i=1}^n \P_{1} (A_i),
\]
with
\[
\P_{1} (A_i^c) = (1-\eps) P_{\lambda_i}(A_i^c) + \frac\eps2 P_{\lambda'_i}(A_i^c) + \frac\eps2 P_{\lambda''_i}(A_i^c).
\]
Applying \lemref{tail-bound}, using \eqref{parameter-sparse} and the fact that $\lambda'_i \sim \lambda''_i \sim \lambda_i \gg \log n$ because of \eqref{normal_cond}, we get
\[
P_{\lambda_i}(A_i^c) \le n^{- 1-\eta +o(1)}, \quad
P_{\lambda'_i}(A_i^c) \vee P_{\lambda''_i}(A_i^c)\le n^{- (\sqrt{1+\eta} - \sqrt{r} )^2 + o(1)},
\]
uniformly over $i = 1,\dots, n$.
Hence,
\[
\P_{1} (A_i) \ge 1 - a_{n}, \quad
\text{for some $a_{n} \le n^{-1-\eta + o(1)} + \eps n^{- (\sqrt{1+\eta} - \sqrt{r})^2 + o(1)}$,}
\]
which in turn implies 
\[
\E_0(\tilde L) \ge (1 - a_n)^n.
\]
Using the expression for $\eps$, we have
\[
n a_n \le n^{-\eta +o(1)} + n^{1 -\beta - (\sqrt{1 + \eta}- \sqrt{r})^2 + o(1)}.
\]
By \eqref{cond-lower} and \lemref{rho-ineq}, for any $\beta \in (1/2, 1)$, we have $r < \rho_{\rm sparse}(\beta) \le (1 - \sqrt{1-\beta})^2 \le (\sqrt{1+\eta} - \sqrt{1-\beta})^2$, which in turn implies that $1 -\beta - (\sqrt{1 + \eta}- \sqrt{r})^2 < 0$.  Therefore, $n a_n = (1)$, and so $\E_0(\tilde L) \ge 1 - o(1)$.

\bigskip\noindent 
{\bf Second moment.}  
We have
\[
\E_0(\tilde L^2) = \prod_{i=1}^n \E_{0}(L_i^2 \IND{A_i}),
\] 
where
\begin{align}
\E_{0}(L_i^2 \IND{A_i}) 
&= \sum_{y_i \le x \le x_i}\frac{\big[(1 - \eps) P_{\lambda_i}(x) + \frac \eps 2 P_{\lambda_i'}(x) + \frac \eps 2 P_{\lambda_i''}(x)\big]^2}{P_{\lambda_i}(x)} \notag\\
&= \sum_{y_i \le x \le x_i} (1 - \eps)^2 P_{\lambda_i}(x) + \eps (1-\eps) \big(P_{\lambda_i'}(x) + P_{\lambda_i''}(x)\big) + \frac{\eps^2}4 \frac{\big(P_{\lambda_i'}(x) + P_{\lambda_i''}(x)\big)^2}{P_{\lambda_i}(x)} \notag\\
&\le (1-\eps)^2 + 2\eps (1-\eps) + \frac{\eps^2}4 \sum_{y_i \le x \le x_i} \frac{2 \big[e^{-\lambda_i'} \frac{\lambda_i'^{x}}{x !}\big]^2 + 2 \big[e^{-\lambda_i''} \frac{\lambda_i''^{x}}{x !} \big]^2}{ e^{-\lambda_i} \frac{\lambda_i^{x}}{x !}}  \notag\\
&= 1 - \eps^2 + \frac{\eps^2}2 \sum_{y_i \le x \le x_i} \frac1{x!} \bigg [e^{-2 \lambda_i' + \lambda_i} \big(\tfrac{\lambda_i'^2}{\lambda_i}\big)^{x} + e^{-2 \lambda_i'' + \lambda_i} \big(\tfrac{\lambda_i''^2}{\lambda_i}\big)^{x} \bigg] \notag \\
&\le 1 + \frac{\eps^2}2 \Big[e^{(\lambda_i' - \lambda_i)^2/\lambda_i}  P_{\lambda_i'^2/\lambda_i}([0, x_i]) + e^{(\lambda_i'' - \lambda_i)^2/\lambda_i}  P_{\lambda_i''^2/\lambda_i}([y_i, \infty)) \Big] \notag\\ 
&\le 1 + \frac12 n^{-2\beta +2r} \Big[P_{\lambda_i'^2/\lambda_i}([0, x_i]) + P_{\lambda_i''^2/\lambda_i}([y_i, \infty))\Big]. \label{truncated-2nd}
\end{align}
In the third line we used the fact that $(a+b)^2 \le 2 a^2 + 2 b^2$ for all $a,b \in \bbR$.

Let $\delta = \rho_{\rm sparse}(\beta) - r$, which is strictly positive by \eqref{cond-lower}

{\bf Case 1.}  When $\beta \le 3/4$, $-2 \beta + 2 r = -1 -\delta$, and we can bound the 2nd term in \eqref{truncated-2nd} by $n^{-1-\delta}$.

{\bf Case 2.}   When $\beta > 3/4$, we distinguish two sub-cases.  Let $f$ be the function defined in \lemref{rho-ineq}.  In the first case, $\delta \ge 1/2$, in which case $-2 \beta + 2 r = -1 - 2 [\delta - f(\beta)] < -1$ for any $\beta < 1$, so that we can bound the 2nd term in \eqref{truncated-2nd} by $n^{-1 - 2 [\delta - f(\beta)]}$.  In the second case, $\delta < 1/2$, so that $f^{-1}(\delta)$ exists in $(3/4, 1)$.  If $\beta < f^{-1}(\delta)$, then $f(\beta) < \delta$ and the same bound on the 2nd term in \eqref{truncated-2nd} applies.  If $\beta \ge f^{-1}(\delta)$, we have $r = \rho_{\rm sparse}(\beta) -\delta \ge \rho_{\rm sparse}(f^{-1}(\delta)) -\delta = f^{-1}(\delta) - 1/2 > 1/4$.  Fix $\eta > 0$ small enough that
\beq \label{eta-choice}
f^{-1}(\delta) - 1/2 > (1+\eta)/4.
\eeq
Since $\lambda'_i \sim \lambda''_i \sim \lambda_i \gg \log n$, 
\[
\lambda_i'^2/\lambda_i = \lambda_i + 2 \sqrt{2 r \log(n)} \sqrt{\lambda_i} (1 + o(1)) \quad \text{and} \quad \lambda_i''^2/\lambda_i = \lambda_i - 2 \sqrt{2 r \log(n)} \sqrt{\lambda_i} (1 + o(1)).
\]
Hence,  
\begin{align*}
P_{\lambda_i'^2/\lambda_i}([0,x_i]) 
&=  P_{\lambda_i'^2/\lambda_i}\Big(Z_i \le - (2 \sqrt{r} -\sqrt{1 + \eta}) \sqrt{2 \log(n)}(1 + o(1))\Big) \\
&= n^{- (2 \sqrt{r} - \sqrt{1 + \eta})^2 + o(1)}, 
\end{align*}
and
\begin{align*}
P_{\lambda_i''^2/\lambda_i}([y_i, \infty)) 
&=  P_{\lambda_i''^2/\lambda_i}\Big(Z_i \ge ( 2 \sqrt{r} - \sqrt{1 + \eta}) \sqrt{2 \log(n)}(1 + o(1))\Big) \\
&= n^{- (2 \sqrt{r} - \sqrt{1 + \eta})^2 + o(1)},
\end{align*}
because of \lemref{tail-bound}, and the fact that $2 \sqrt{r} > \sqrt{1+\eta}$ by our choice of $\eta$ in \eqref{eta-choice}.
We can thus bound on the 2nd term in \eqref{truncated-2nd} by
\[
n^{2r - 2\beta - (2 \sqrt{r} - \sqrt{1 + \eta})^2 + o(1)}.
\]
When $\eta = 0$, the exponent is equal to
\[
2r - 2\beta - (2 \sqrt{r} - 1)^2 = -1 - 2 (\beta - 1 + (1 - \sqrt{r})^2) < -1 - 2 (\beta - 1 + (1 - \rho_{\rm sparse}^{1/2}(\beta))^2) = -1.
\]
Hence, when $\eta > 0$ is small enough,  
\beq \label{eta-choice2}
2r - 2\beta - (2 \sqrt{r} - \sqrt{1 + \eta})^2 < -1.
\eeq

We conclude that $\E_0(\tilde{L}_i^2) \le 1 + o(n^{-1})$, uniformly in $i$, which implies that 
\[
\E_0(\tilde{L}^2) \le (1 + o(n^{-1}))^n = 1 + o(1).
\]

\subsection{Proof of \prpref{small_lb}}

The proof parallels that of \prpref{lower}.  Here we define
\[
x_i = (1+c) \frac{\log n}{\log (\zeta_i)}, \quad \zeta_i := \frac{\log n}{\lambda_i},
\]
where $c$ is a small positive constant that will be chosen later on, and consider the following truncated likelihood
\[
\tilde{L} = \prod_{i=1}^n L_i \IND{A_i}, \quad A_i := \{X_i \le x_i \}.
\]

\bigskip\noindent 
{\bf First moment.}  
Taking into account the fact that $\lambda_i'' = 0$, it suffices to prove that 
\[
P_{\lambda_i}(A_i^c) + \eps P_{\lambda'_i}(A_i^c) = o(1/n),
\]
uniformly over $i = 1,\dots,n$.
Let $h(t) = t \log t -t + 1$.  There is $t_0$ such that, for $t \ge t_0$, $h((1+c) t) \ge (1+c/2) t \log t$.
Note that $x_i/\lambda_i \ge \zeta_i/ \log(\zeta_i) \ge \zeta_{\rm min}/ \log(\zeta_{\rm min}) \to \infty$, eventually, since \eqref{small} implies $\zeta_{\rm min} := \min_i \zeta_i \to \infty$.  Hence, using \lemref{chernoff}, we get
\[
\log P_{\lambda_i}(A_i^c) \le - \lambda_i h(x_i/\lambda_i) \le - \lambda_i (1+c/2) \frac{\zeta_i}{\log (\zeta_i)} \log\Big(\frac{\zeta_i}{\log (\zeta_i)}\Big) \le - (1+c/3) \log n, 
\] 
as soon as $\zeta_{\rm min}/ \log(\zeta_{\rm min})$ is large enough.
This implies that $\max_i P_{\lambda_i}(A_i^c) = o(1/n)$.

Note that $(\log n)/\lambda_i' = \zeta_i^{1-\gamma}$.
So we also have $x_i/\lambda_i' \ge \zeta_{\rm min}^{1-\gamma}/ \log(\zeta_{\rm min}) \to \infty$ eventually, and using \lemref{chernoff}, we get
\[
\log P_{\lambda_i'}(A_i^c) \le - \lambda_i' h(x_i/\lambda_i') \le - \lambda_i' (1+c/2) \frac{\zeta_i^{1-\gamma}}{\log (\zeta_i)} \log\Big(\frac{\zeta_i^{1-\gamma}}{\log (\zeta_i)}\Big) \le - (1+c/3)(1-\gamma) \log n, 
\] 
as soon as $\zeta_{\rm min}^{1-\gamma}/ \log(\zeta_{\rm min})$ is large enough.
Since $\gamma < \beta$ by assumption, this implies $\eps \max_i P_{\lambda_i'}(A_i^c) = o(1/n)$.

\bigskip\noindent 
{\bf Second moment.}  
Taking into account the fact that $\lambda_i'' = 0$, it suffices to prove that 
\[
\eps^2 \Big[e^{(\lambda_i' - \lambda_i)^2/\lambda_i}  P_{\lambda_i'^2/\lambda_i}([0, x_i]) + e^{\lambda_i} \Big] = o(1/n),
\]
uniformly over $i = 1,\dots,n$.
We quickly see that
\[
\eps^2 e^{\lambda_i} \le n^{-2 \beta + 1/\zeta_{\rm min}} = n^{-2 \beta + o(1)} = o(1/n), 
\]
since $\beta > 1/2$ is fixed.  
For the other term, we distinguish two cases.

{\bf Case 1.}
First, assume that $\gamma < 1/2$.  
Then 
\[
\eps^2 e^{(\lambda_i' - \lambda_i)^2/\lambda_i}  P_{\lambda_i'^2/\lambda_i}([0, x_i]) \le \eps^2 e^{\lambda_i'^2/\lambda_i} \le n^{-2\beta + \zeta_{\rm min}^{2 \gamma-1}} = n^{-2 \beta + o(1)} = o(1/n).
\]

{\bf Case 2.}
Now, assume that $\gamma \ge 1/2$.  
Then $\lambda_i'^2/(\lambda_i x_i) \ge \zeta_{\rm min}^{2\gamma-1} \log \zeta_{\rm min} \to \infty$, so that applying \lemref{chernoff}, we get
\[
\log P_{\lambda_i'^2/\lambda_i}([0, x_i]) \le -\frac{\lambda_i'^2}{\lambda_i} h(x_i \lambda_i/\lambda_i'^2) = x_i \log(\lambda_i'^2/(\lambda_i x_i)) + x_i -\frac{\lambda_i'^2}{\lambda_i},
\] 
with 
\beq \label{xlog}
x_i \log(\lambda_i'^2/(\lambda_i x_i)) \le (1+c) (\log n) \Big[(2 \gamma-1) + \frac{\log \log \zeta_{\rm min}}{\log \zeta_{\rm min}} \Big],
\eeq
so that
\beqn
\eps^2 e^{(\lambda_i' - \lambda_i)^2/\lambda_i}  P_{\lambda_i'^2/\lambda_i}([0, x_i]) 
&\le& \exp\Big[ -2 \beta \log n - 2 \lambda_i' + \lambda_i + x_i \log(\lambda_i'^2/(\lambda_i x_i)) + x_i\Big] \\
&\le& n^{-2 \beta  + (1+c) (2 \gamma -1) + o(1)},
\eeqn
uniformly over $i = 1, \dots, n$, since in addition to \eqref{xlog}, we also have $- 2 \lambda_i' + \lambda_i + x_i \le x_i \le (1+c) \log n/\log \zeta_{\rm min} = o(\log n)$.
Since $\gamma < \beta$, we may choose $c > 0$ small enough that $-2 \beta  + (1+c) (2 \gamma -1) < -1$.

\subsection{Proof of \prpref{chi-squared}}

We have
\[
\E(\Upsilon_\lambda) = \lambda, \quad \Var(\Upsilon_\lambda) = \lambda, \quad \E(\Upsilon_\lambda-\lambda)^3=\lambda, \quad \E(\Upsilon_\lambda-\lambda)^4 = 3 \lambda^2 + \lambda.
\]
Using this, for the Poisson model \eqref{model}, we have
\[
\E_0(D) = n, \quad \E_1(D) = n +  \eps \sum_{i=1}^n \frac{\Delta_i^2}{\lambda_i}, \quad \Var_0(D) = 2n + \sum_{i=1}^n \frac1{\lambda_i}, 
\]
and, after some simple but tedious calculations,
\[
\Var_1(D) = \Var_0(D) + \eps R, 
\]
where 
\[
R = \sum_{i=1}^n \Bigg[\frac{4 \Delta_i^2}{\lambda_i} + \frac{7 \Delta_i^2}{\lambda_i^2} + \frac{(1-\eps) \Delta_i^4}{\lambda_i^2}\Bigg] \le C \sum_{i=1}^n (a_i + a_i^2)\ ,
\]
for some universal constant $C>0$, using \eqref{lambda-lb}.
We have $\E_1(D) - \E_0(D) = \eps \sum_{i=1}^n a_i$ and $\Var_0(D) \vee \Var_1(D) \le 2n + \sum_{i=1}^n \frac1{\lambda_i} + C \eps \sum_{i=1}^n (a_i + a_i^2)$.
Because of \eqref{lambda-lb}, we have $\sum_{i=1}^n \frac1{\lambda_i} = O(n)$ and then, by \eqref{chi2-1}, we have $\eps \sum_{i=1}^n a_i \to \infty$.  With this and the second part of \eqref{chi2-1}, it becomes straightforward to see that the first part of \lemref{basic} applies and we conclude that way.

We now prove that the chi-squared test is asymptotically powerless under \eqref{chi2-2}.  
For one thing, this condition implies that $\Var_1(D) \sim \Var_0(D)$, based on \eqref{chi2-2} and the bound on $R$ above, and also that
$\E_1(D) - \E_0(D) \ll \sqrt{\Var_1(D) \vee \Var_0(D)}$.
It therefore suffices to prove that $D$ is asymptotically normal both under the null and under the alternative.
We have $D = \sum_i Z_i^2$, where $Z_i^2 := (X_i - \lambda_i)^2/\lambda_i$, and these being independent random variables, it suffices to verify Lyapunov's conditions.   
Some straightforward calculations yield 
\[
\E_0(Z_i^2 - \E_0(Z_i^2))^4 = \E_0(Z_i^2 -1)^4 \le C \Big(1 + \frac1{\lambda_i} + \frac1{\lambda_i^2} + \frac1{\lambda_i^3}\Big),
\]
for some constant $C > 0$, and using \eqref{lambda-lb}, we get
\[
\Var_0(D)^{-2} \sum_{i=1}^n \E_0(Z_i^2 - 1)^4 = O(1/n^2) n = O(1/n) = o(1).
\]
With some more work, and using \eqref{lambda-lb}, we also obtain 
\[
\E_1(Z_i^2 - \E_1(Z_i^2))^4 \le C \big(1 + \eps (a_i + a_i^4) \big),
\]
for some constant $C > 0$, so that
\[
\Var_1(D)^{-2} \sum_{i=1}^n \E_1(Z_i^2 - \E_1(Z_i^2))^4 = O(1/n^2) \sum_{i=1}^n \big(1 + \eps (a_i + a_i^4) \big) 
= o(1),
\]
which is an immediate consequence of \eqref{chi2-2}.

\subsection{Proof of \prpref{max-sparse}}

When $r > (1 - \sqrt{1 - \beta})^2$, there exists a $\delta > 0$ such that $r > (\sqrt{1 + \delta}- \sqrt{1 - \beta})^2$.
Define the threshold $c_n = \sqrt{2(1 + \delta) \log(n)}$. Under the null, by the union bound and \lemref{tail-bound}, under \eqref{normal_cond},
\[
\P_0(M \ge c_n) \le \sum_{i=1}^n \P_0(|Z_i| \ge c_n) = n ^{- \delta + o(1)} = o(1).
\]

Under the alternative, define $I' := \{i: X_i \sim \Pois(\lambda_i') \}$ and $p_{i,n}' = \P(\Upsilon_{\lambda_i'} \ge \lambda_i + c_n \sqrt{\lambda_i})$.  By \lemref{tail-bound}, we have 
\[
p_n' := \min_{i = 1,\dots, n} p_{i,n}' \ge n^{-(\sqrt{1 + \delta} - \sqrt{r})^2 + o(1)}.
\]
We then derive the following
\beqn
\P_1(M \ge c_n) 
&\ge& \P \big(\max_{i \in I'} Z_i \ge c_n \big) \\
&=& 1 - \E\Big[ \prod_{i \in I'} (1 - p_{i,n}') \Big] \\
&\ge& 1 - \E\Big[ (1 - p_n')^{|I'|} \Big] \\
&\ge& 1 - (1 - p_n')^{n \eps/4} -o(1),
\eeqn
where in the last line we used the fact that $|I'| \sim \Bin(n, \eps/2)$, so that $|I'| \ge n\eps/4$ with probability tending to one.
Since
\[
(n \eps) p_n' \ge n ^{1 - \beta - (\sqrt{1 + \delta} - \sqrt{r})^2 + o(1)} \to \infty, \quad n \to \infty,
\]
because $r > (\sqrt{1 + \delta}- \sqrt{1 - \beta})^2$ by construction, we have $\P_1(M \ge c_n) \to 1$ as $n \to \infty$, as we needed to prove.

\subsection{Proof of \prpref{hc-sparse}}
We first control the size of the statistic $T^\star$ under the null.  For each $z \in \bbR$, the variables $\IND{|Z_i| > z}, i=1,\dots,n,$ are independent Bernoulli, with respective parameters $K_{\lambda_i}(z), i=1,\dots,n$.  
We can therefore apply Bernstein's inequality, to get
\[
\log \P_0\Big( \textstyle\sum_i (\IND{|Z_i| > z} - K_{\lambda_i}(z)) > t \sigma(z) \Big) \le - \frac{\frac12 t^2}{1 + \frac13 t/\sigma_z}, \quad \forall t \ge 0,
\] 
where $\sigma_z^2 := \sum_i K_{\lambda_i}(z) (1-K_{\lambda_i}(z))$.
Choosing $t = 2 \sqrt{\log n}$ and letting $z \in \cZ_n$, so that $\sigma_z \ge \frac12 t$, the right-hand side is bounded by $- \frac65 \log n$. 
Thus, applying the union bound, we get
\[\P_0\Big( T^\star > 2 \sqrt{\log n} \Big) \le |\cZ_n| n^{-6/5},\]
where $|\cZ_n|$ is the cardinality of $\cZ_n$.  We now show that $|\cZ_n|$ is subpolynomial in $n$.
By \lemref{chernoff}, we have 
\[
K_\lambda(z) \le e^{-\lambda h(1 + z/\sqrt{\lambda})} + e^{-\lambda h(1 - z/\sqrt{\lambda})},
\]
where $h$ is defined in that lemma, and extended as $h(t) = \infty$ when $t < 0$, so that this inequality is true for all $\lambda, z > 0$.
Note that $h(1+t) = t^2/2 + O(t^3)$ when $t = o(1)$.
Take $z_n = \sqrt{3 \log n}$.  Because of \eqref{normal_cond}, uniformly in $i=1,\dots,n$, we have $K_{\lambda_i}(z_n) \le n^{-3/2 + o(1)}$, and in particular, $\sigma_{z_n}^2 \le n^{-1/2 + o(1)} < \log n$ eventually.  Hence, by monotonicity, $z \le z_n$ for all $z \in \cZ_n$.  In particular, $|\cZ_n| \le z_n$.
Hence, we arrive at the conclusion that $\P_0\big( T^\star > 2 \sqrt{\log n} \big) = o(1)$.

Suppose we are now under the alternative.
We focus on the case where $r < 1$, which is more subtle.
Consider $z_n(q) = \lfloor \sqrt{2 q \log n} \rfloor$, defined for any $q > 0$.
By \lemref{tail-bound}, when \eqref{normal_cond} and \eqref{parameter-sparse} hold, we have $K_{\lambda_i}(z_n(q)) = n^{-q + o(1)}$ uniformly over $i$.  Hence, 
\[p_{n,i}^0(q) := \P_0(|Z_i| > z_n(q)) = K_{\lambda_i}(z_n(q)) = n^{-q + o(1)},\] 
uniformly over $i$.
In particular, when $q \in (0,1)$ is fixed, $\sigma_{z_n(q)}^2 = n^{1-q +o(1)} \ge \log n$, eventually, in which case $z_n(q) \in \cZ_n$.
Hence, for each fixed $q \in (0,1)$, we have $T^\star \ge T(z_n(q))$ for $n$ large enough, and so it suffices to prove that, for some well-chosen $q$, $\P_1(T(z_n(q)) \le 2 \sqrt{\log n}) = o(1)$.

Assume $q > r$.
By \lemref{tail-bound} again, this time under the alternative, and also assuming that \eqref{normal_cond} and \eqref{parameter-sparse} hold,
then
\beqn
K_{\lambda_i'}(z_n(q)) &=& n^{-(\sqrt{q} - \sqrt{r})^2 + o(1)}, \\
K_{\lambda_i''}(z_n(q)) &=& n^{-(\sqrt{q} - \sqrt{r})^2 + o(1)},
\eeqn
uniformly over $i = 1,\dots, n$.
Hence, 
\beqn
p_{n,i}^1(q) := \P_1(|Z_i| > z_n(q)) 
&=& (1-\eps) K_{\lambda_i}(z_n(q)) + \frac\eps2 K_{\lambda_i'}(z_n(q)) + \frac\eps2 K_{\lambda_i''}(z_n(q)) \\
&=& p_{n,i}^0(q) + n^{-\beta-(\sqrt{q} - \sqrt{r})^2 + o(1)}.
\eeqn
It follows that
\beqn
\E_1(T(z_n(q))) &=& \frac{\sum_i (p_{i, n}^1(q) - p_{i, n}^0(q)) }{\sqrt{\sum_i p_{i, n}^0(q) (1- p_{i, n}^0(q))}} 
=  \frac{ n^{1 -\beta - (\sqrt{q} - \sqrt{r})^2 + o(1)}}{\sqrt{n^{1 - q + o(1)}}} \\
&=& n^{1/2 + q/2 - \beta - (\sqrt{q} - \sqrt{r})^2 + o(1)} 
\eeqn
and
\[
\Var_1(T(z_n(q))) = \frac{\sum_{i=1}^n p_{i, n}^1(q)( 1 - p_{i, n}^1(q))}{\sum_i p_{i, n}^0(q) (1- p_{i, n}^0(q))}  
= O(1) \vee n^{q - \beta - (\sqrt{q} - \sqrt{r})^2 + o(1)}.
\]

First, assume that $r < 1/4$, so that $r - (\beta -1/2) = r - \rho_{\rm sparse}(\beta) > 0$, where the equality follows from \eqref{lower} and the fact that $r < 1/4$.
We take $q = 4 r$ and get 
\[
\E_1(T(z_n(4r))) = n^{r  - \beta + 1/2 + o(1)},
\]
with $r  - \beta + 1/2 = r  - (\beta - 1/2) > 0$, 
and 
\[
\Var_1(T(z_n(4r))) = O(1) \vee n^{-\beta + 3 r + o(1)}.
\]
By Chebyshev's inequality, we have
\beqn
\P_1(T(z_n(4r) < 2 \sqrt{\log n}) \le \frac{\Var_1(T(z_n(4r))}{(\E_1(T(z_n(4r)) - 2 \sqrt{\log n})^2} &=& \frac{O(1) \vee n^{-\beta + 3 r + o(1)}}{n^{1 + 2 r - 2\beta + o(1)}} \\
&=&  \begin{cases} 
O(n^{-1 - 2r + 2 \beta + o(1)}), & \text{if~} \beta \ge 3 r, \\ 
O(n^{\beta + r -1 + o(1)}), & \text{if~} \beta < 3r,
\end{cases}
\eeqn
with $-1 - 2r + 2 \beta < -1 - 2(\beta-1/2)+ 2 \beta = 0$ and $\beta + r -1< r + 1/2 + r -1 < 0$ since $r < 1/4$.

Now, assume that $r \ge 1/4$, which together with $r > \rho_{\rm sparse}(\beta)$ and $r \ge 1/4$ implies that $r > (1 - \sqrt{1 - \beta})^2$, which in turn forces $1 - \beta - (1 - \sqrt{r})^2 > 0$.
Take $r < q<1$ such that $1 - \beta - (\sqrt{q} - \sqrt{r})^2 > 0$
Then 
\[
 \E_1(T(z_n(q))) = n^{1 - \beta - (\sqrt{q} - \sqrt{r})^2 +o(1)}
\]
and
\[
\Var_1(T(z_n(q))) = n^{1 - \beta - (\sqrt{q} - \sqrt{ r})^2 + o(1)}.
\]
Thus, by Chebyshev's inequality,
\[
\P_1(T(z_n(q)) < 2 \sqrt{\log n}) \le \frac{\Var_1(T(z_n(q))}{(\E_1(T(z_n(q))) - 2 \sqrt{\log n})^2} = n^{(\sqrt{q} - \sqrt{r})^2 - 1 + \beta + o(1)} = o(1).
\]

\subsection{Proof of \prpref{bonferroni}}

Consider the situation under the null.
Because of \lemref{pval}, we have
\[
\min_i p_i \ge^{\rm sto} \min_i u_i, \quad u_1, \dots, u_n \iid \Unif(0,1).
\]
Therefore, under the null we have $\P_0(\min_i p_i \le \omega_n/n) = o(1)$ for any sequence $\omega_n = o(1)$.  
Take $\omega_n = 1/\log n$.

Under the alternative, let $I' = \{i : X_i \sim {\rm Pois}(\lambda_i')\}$.
Note that $\lambda_i h(X_i/\lambda_i) \ge \log(n/\omega_n)$ implies 
\[
p_i = \P(\Upsilon_{\lambda_i} \ge X_i | X_i) 
\le \omega_n/n,
\] 
where the equality is due to the fact that, necessarily, $X_i \ge 3 \lambda_i$ eventually, and the inequality comes from \lemref{chernoff}.
Thus, defining $q_i = \P\big(\lambda_i h(\Upsilon_{\lambda_i'}/\lambda_i) \ge \log (n/\omega_n)\big)$, we arrive at
\beqn
\P_1(\min_i p_i > \omega_n/n) 
&\le& \P\Big(\min_{i \in I'} p_i > \omega_n/n\Big) \\
&\le& \E \Big[\textstyle\prod_{i \in I'} (1-q_i)\Big] \\
&\le& (1-q_{\rm min})^{n \eps/4},
\eeqn
where $q_{\rm min} := \min_{i=1,\dots,n} q_i$, and in the last line we used the fact that $|I'| \sim \Bin(n, \eps/2)$, so that $|I'| \ge n\eps/4$ with probability tending to one.
Note that 
\[q_i = \P\Big(\Upsilon_{\lambda_i'} \ge b_i \Big), \quad b_i := \lambda_i h^{-1}\big(\tfrac{\log (n/\omega_n)}{\lambda_i}\big),\]
where for $t \ge 0$, $h^{-1}(t)$ is defined as the unique $x \ge 1$ such that $h(x) = t$.
Notice that $h^{-1}(t) \sim t/\log t$ when $t \to \infty$.
Let $\zeta_i = \log n/\lambda_i$, so that $\zeta_{\rm min} := \min_i \zeta_i \to \infty$ when \eqref{small} holds.
We have 
\[b_i / \lambda_i' \sim \log n / (\lambda_i' \log \zeta_i) = \zeta_i^{1-\gamma}/\log \zeta_i \ge \zeta_{\rm min}^{1-\gamma}/\log \zeta_{\rm min} \to \infty.\]
Therefore, applying the first lower bound in \lemref{chernoff}, we get 
\[
\log q_i \ge - \lambda_i' h(\lceil b_i \rceil/\lambda_i') - \tfrac12 \log \lceil b_i \rceil - 1 \sim - b_i \log (b_i/\lambda_i') \sim - \frac{\log n}{\log \zeta_i} \log( \zeta_i^{1-\gamma}) = - (1-\gamma) \log n,
\]
uniformly over $i=1,\dots,n$ because $\min_i (b_i \wedge (b_i/\lambda_i') \wedge \zeta_i) \to \infty$.  In particular, $q_{\rm min} \ge n^{\gamma-1 + o(1)}$, implying that $n \eps q_{\rm min} \ge n^{\gamma-\beta +o(1)} \to \infty$, because $\gamma > \beta$ by assumption.  We conclude that $\P_1(\min_i p_i > \omega_n/n) = o(1)$, as we needed to prove.

\section{The one-sided setting} \label{sec:one-sided}

Up until now, we considered a two-sided setting, partly motivated by the important example of goodness-of-fit testing, where Pearson's chi-squared test is omnipresent.  Simpler is a one-sided setting, where instead of \eqref{model} we have
\beq \label{model1}
X_i \sim (1 - \eps)~\Pois(\lambda_i) + \eps~\Pois(\lambda_i'),
\eeq
together with $\lambda_i' = \lambda_i + \Delta_i$ and $\eps \in [0,1]$, and address the problem \eqref{problem} in this context.  
Such a model may be relevant in some image processing applications where the goal is to detect an anomaly in the form pixels with higher-intensity.

\subsection{Dense Regime}
In the dense regime where \eqref{eps} holds with $\beta < 1/2$, we consider the same parameterization \eqref{parameter-dense}.
Define
\beq \label{lower_den1}
\rho^{\rm one}_{\rm dense}(\beta) = \beta -\frac12.
\eeq

\begin{prp} \label{prp:lower_den1}
Consider the testing problem \eqref{problem} in the one-sided setting \eqref{model1}, with parameterizations \eqref{eps} with $\beta < 1/2$ and \eqref{parameter-dense}.
All tests are asymptotically powerless if 
\beq \label{cond-lower-den1}
s < \rho^{\rm one}_{\rm dense}(\beta).
\eeq
\end{prp}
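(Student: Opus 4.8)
The plan is to follow the proof of \prpref{lower_den} essentially verbatim, applying the plain (untruncated) second moment method to the likelihood ratio. Write $L = \prod_{i=1}^n L_i$, where
\[
L_i := \frac{(1-\eps) P_{\lambda_i}(X_i) + \eps \, P_{\lambda_i'}(X_i)}{P_{\lambda_i}(X_i)}
\]
is the per-coordinate likelihood ratio for the one-sided model \eqref{model1}. Since $L$ is the likelihood ratio of a simple-versus-simple problem, it suffices, exactly as in the proof of \prpref{lower_den}, to show that $\Var_0(L) \to 0$, or equivalently $\E_0(L^2) \le 1 + o(1)$; this forces the risk of the LRT, and hence of every test, to tend to one.

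The first step is to evaluate $\E_0(L^2) = \prod_{i=1}^n \E_0(L_i^2)$ exactly. Expanding the square in $\E_0(L_i^2)$ and using $\sum_{x \ge 0}\mu^x/x! = e^\mu$, every contribution is constant in $x$ except the cross term, which gives
\[
\eps^2 \sum_{x \ge 0} \frac{P_{\lambda_i'}(x)^2}{P_{\lambda_i}(x)} = \eps^2 \, e^{-2\lambda_i' + \lambda_i + \lambda_i'^2/\lambda_i} = \eps^2 \, e^{(\lambda_i' - \lambda_i)^2/\lambda_i} = \eps^2 \, e^{n^{2s}},
\]
the last equality using $\Delta_i = n^s \sqrt{\lambda_i}$ from \eqref{parameter-dense}. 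Hence $\E_0(L_i^2) = (1-\eps)^2 + 2\eps(1-\eps) + \eps^2 e^{n^{2s}} = 1 + a_n$ with $a_n := \eps^2(e^{n^{2s}} - 1)$, which does not depend on $i$ nor on the actual values of the $\lambda_i$'s, so that \eqref{lambda-lb} is not needed here.

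The remaining step is routine bookkeeping. Condition \eqref{cond-lower-den1} and $\beta < 1/2$ give $s < \rho^{\rm one}_{\rm dense}(\beta) = \beta - 1/2 < 0$, so $n^{2s} \to 0$, and a first-order Taylor expansion yields $a_n \sim \eps^2 n^{2s} = n^{-2\beta + 2s}$. Then $\E_0(L^2) = (1 + a_n)^n \le \exp(n a_n)$, and $n a_n \sim n^{1 - 2\beta + 2s} \to 0$ precisely because $s < \beta - 1/2$; hence $\E_0(L^2) \to 1$. There is no genuine obstacle: the computation is in fact lighter than in the two-sided case, since only the component $\Pois(\lambda_i')$ is present, so the cross term contributes $e^{n^{2s}} - 1 \sim n^{2s}$ rather than the symmetrized $\cosh(n^{2s}) - 1 \sim \tfrac12 n^{4s}$ appearing in the proof of \prpref{lower_den} --- and this is exactly what moves the boundary from $\beta/2 - 1/4$ to $\beta - 1/2$. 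The only points worth a line of care are the closed form of the sum above and the validity of the Taylor expansion, both immediate since the exponent $n^{2s}$ tends to zero.
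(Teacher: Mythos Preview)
Your proof is correct and follows exactly the approach the paper indicates: the paper omits the proof, stating only that it is ``parallel to that of \prpref{lower_den} --- in fact simpler,'' which is precisely what you carry out. One minor wording quibble: the $\eps^2$ term you compute is the pure quadratic term, not a ``cross term,'' but the mathematics is right and the comparison with the two-sided case (yielding $e^{n^{2s}}-1$ versus $\cosh(n^{2s})-1$) neatly explains the shift in the boundary.
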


The proof is parallel to that of \prpref{lower_den} --- in fact simpler --- and is omitted.
We note that this detection boundary is in direct correspondence with that in the normal model \citep{cai2010optimal}.

In the one-sided setting, the chi-squared test does not achieve the detection boundary.  However, its one-sided version does.  Indeed, consider the test that rejects for large values of 
\beq \label{chi-square-stats1}
\sum_{i=1}^n \frac{X_i - \lambda_i}{\sqrt{\lambda_i}}.
\eeq

\begin{prp} \label{prp:chi-squared1}
Consider the testing problem \eqref{problem} in the one-sided setting \eqref{model1}, with \eqref{lambda-lb}, and let $a_i = \Delta_i/\sqrt{\lambda_i}$.  
The test based on \eqref{chi-square-stats1} is asymptotically powerful if \eqref{chi2-1} holds.  In particular, with parameterization \eqref{eps} with $\beta < 1/2$ and \eqref{parameter-dense}, the test is asymptotically powerful when $s > \rho^{\rm one}_{\rm dense}(\beta)$.
\end{prp}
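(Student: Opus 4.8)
The plan is to apply the first (Chebyshev-type) half of \lemref{basic} to the statistic $T_n := \sum_{i=1}^n (X_i - \lambda_i)/\sqrt{\lambda_i}$, mirroring the ``powerful'' direction in the proof of \prpref{chi-squared}. In particular, no central limit theorem is needed, which is why we do not bother with a powerless counterpart here.

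\emph{Moments.} First I would record the first two moments of $T_n$. Under the null, $\E_0(T_n) = 0$ and, since $\Var(\Upsilon_{\lambda_i}) = \lambda_i$, $\Var_0(T_n) = n$. Under the alternative \eqref{model1}, $\E_1(X_i) = \lambda_i + \eps \Delta_i$, so $\E_1(T_n) - \E_0(T_n) = \eps \sum_i \Delta_i/\sqrt{\lambda_i} = \eps \sum_i a_i$. A direct second-moment computation for the two-point mixture gives $\Var_1(X_i) = \lambda_i + \eps \Delta_i + \eps(1-\eps)\Delta_i^2$, hence
\[
\Var_1(T_n) = n + \eps \sum_i \frac{a_i}{\sqrt{\lambda_i}} + \eps(1-\eps) \sum_i a_i^2 \le n + \eps \sum_i a_i + \eps \sum_i a_i^2 ,
\]
using \eqref{lambda-lb} to replace $\Delta_i/\lambda_i$ by $a_i/\sqrt{\lambda_i} \le a_i$ and $\Delta_i^2/\lambda_i$ by $a_i^2$. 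By Cauchy--Schwarz together with $\eps \le 1$, $\eps \sum_i a_i \le \tfrac12 n + \tfrac12 \eps \sum_i a_i^2$, so $\Var_0(T_n) \vee \Var_1(T_n) \le C\big(n + \eps \sum_i a_i^2\big)$ for a universal constant $C$.

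\emph{Applying \lemref{basic}.} With the threshold $t_n$ prescribed by \lemref{basic}, the relevant quantity is
\[
a_n = \frac{\E_1(T_n) - \E_0(T_n)}{\sqrt{\Var_0(T_n) \vee \Var_1(T_n)}} \gtrsim \frac{\eps \sum_i a_i}{\sqrt{n + \eps \sum_i a_i^2}} \gtrsim \min\!\left( \frac{\eps \sum_i a_i}{\sqrt n}, \; \sqrt{\frac{\eps \,(\sum_i a_i)^2}{\sum_i a_i^2}} \right).
\]
The first term in the minimum tends to $\infty$ by the first half of \eqref{chi2-1} ($\eps \sum_i a_i \gg \sqrt n$), and the second term tends to $\infty$ by the second half of \eqref{chi2-1} ($\eps (\sum_i a_i)^2 \gg \sum_i a_i^2$). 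Hence $a_n \to \infty$ and \lemref{basic} yields that the test is asymptotically powerful.

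\emph{The parameterized case, and the only delicate point.} Under \eqref{eps} and \eqref{parameter-dense} we have $a_i \equiv n^s$, so $\sum_i a_i = n^{1+s}$ and $\sum_i a_i^2 = n^{1+2s}$; the first condition in \eqref{chi2-1} reads $n^{1+s-\beta} \gg n^{1/2}$, i.e.\ $s > \beta - \tfrac12 = \rho^{\rm one}_{\rm dense}(\beta)$, while the second reads $n^{2+2s-\beta} \gg n^{1+2s}$, i.e.\ $\beta < 1$, which always holds in the dense regime. So the test is asymptotically powerful whenever $s > \rho^{\rm one}_{\rm dense}(\beta)$. There is no real obstacle here: the only step requiring a little care is the bookkeeping in $\Var_1(X_i)$ and the uniform use of \eqref{lambda-lb}; everything else is immediate, and in contrast with the powerless direction we never need $T_n$ to be asymptotically normal.
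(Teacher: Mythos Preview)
Your proof is correct and follows exactly the approach the paper indicates (the paper omits the proof, stating only that it is parallel to, and simpler than, that of \prpref{chi-squared}): you apply the first part of \lemref{basic} to $T_n$, compute the first two moments under $H_0$ and $H_1$, bound $\Var_1(T_n)$ using \eqref{lambda-lb}, and verify \eqref{chi2-1} in the parameterized case. The moment computations and the final specialization to $a_i=n^s$ are all accurate.
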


The proof is parallel to, and in fact much simpler than, that of \prpref{chi-squared}, and is omitted.

All the arguments are simpler in the one-sided setting, so much so that we are able to analysis Fisher's method.
In the one-sided setting, instead of \eqref{pval}, define the P-values as in \eqref{pval1}.
Note that \lemref{pval} still applies.

\begin{prp} \label{prp:fisher}
Consider the testing problem \eqref{problem} in the one-sided setting \eqref{model1}, with \eqref{lambda-lb}, and let $a_i = \Delta_i/\sqrt{\lambda_i}$.  
Fisher's test (based on \eqref{fisher}) is asymptotically powerful if 
\[
\eps \sum_i (a_i \wedge 1) \gg \sqrt{n}.
\]
In particular, with parameterization \eqref{eps} with $\beta < 1/2$ and \eqref{parameter-dense}, Fisher's test is asymptotically powerful when $s > \rho^{\rm one}_{\rm dense}(\beta)$.
\end{prp}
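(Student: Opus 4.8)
The plan is to apply the first part of \lemref{basic} to a mildly truncated version of Fisher's statistic. Write it as $T_n = -2\sum_{i=1}^n\log p_i = \sum_{i=1}^n W_i$, with $W_i := -2\log p_i$ and $p_i = G^{\rm one}_{\lambda_i}(X_i) = \P(\Upsilon_{\lambda_i}\ge X_i)$; the $W_i$ are independent. Since \lemref{pval} applies verbatim to $G^{\rm one}_\lambda$, under the null each $p_i$ stochastically dominates $\Unif(0,1)$, so $\P_0(W_i > w)\le e^{-w/2}$ for all $w\ge 0$; in particular all moments of $W_i$ under the null are bounded by universal constants, whence $\E_0(T_n)\le 2n$ and $\Var_0(T_n)\le 8n$. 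Under $H_1$ the variable $X_i$ is drawn from $\Pois(\lambda_i)$ with probability $1-\eps$ --- in which case $W_i$ has exactly its null law --- so every moment of $W_i$ under $H_1$ equals $(1-\eps)$ times its null value plus $\eps$ times its value under $\Pois(\lambda_i')$. In particular $\E_1(T_n)-\E_0(T_n) = \eps\sum_{i=1}^n\delta_i$, where $\delta_i := \E\big[W_i\mid X_i\sim\Pois(\lambda_i')\big] - \E_0(W_i)\ge 0$, the inequality holding because $y\mapsto-\log\P(\Upsilon_{\lambda_i}\ge y)$ is nondecreasing and $\Pois(\lambda_i')$ stochastically dominates $\Pois(\lambda_i)$.

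The crux of the proof --- and the main obstacle --- is the per-coordinate estimate
\[
\delta_i \ \ge\ c\,(a_i\wedge 1), \qquad a_i = \Delta_i/\sqrt{\lambda_i},
\]
for a universal constant $c>0$, valid under \eqref{lambda-lb}. To establish it I would use the monotone coupling $X_i' = X_i^0 + N_i$ with $X_i^0\sim\Pois(\lambda_i)$ and $N_i\sim\Pois(\Delta_i)$ independent, so $X_i'\sim\Pois(\lambda_i')$. Writing $g_\lambda(j) := P_\lambda(j)/\P(\Upsilon_\lambda\ge j)$ for the (nondecreasing) Poisson hazard, a telescoping identity and the bound $-\log(1-x)\ge x$ yield
\begin{align*}
\delta_i &= 2\,\E\bigg[\log\frac{\P(\Upsilon_{\lambda_i}\ge X_i^0)}{\P(\Upsilon_{\lambda_i}\ge X_i^0+N_i)}\bigg] \\
&\ge 2\,\E\bigg[\sum_{j=X_i^0}^{X_i^0+N_i-1}g_{\lambda_i}(j)\bigg]
\ \ge\ 2\,\E[N_i]\,\E[g_{\lambda_i}(X_i^0)]
\ =\ 2\,\Delta_i\,S(\lambda_i),
\end{align*}
where $S(\lambda) := \E[g_\lambda(\Upsilon_\lambda)] = \sum_{k\ge0}P_\lambda(k)^2/\P(\Upsilon_\lambda\ge k)$, and the last inequality uses monotonicity of $g_{\lambda_i}$ together with independence of $N_i$ and $X_i^0$. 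Using \lemref{berry-bound} and a local central limit estimate for the Poisson distribution, one checks that $\sqrt{\lambda}\,S(\lambda)\to\int_\bbR\phi^2/\bar\Phi\in(0,\infty)$ as $\lambda\to\infty$, while $S(\lambda)>0$ for every $\lambda\ge 1$; hence $c_0 := \inf_{\lambda\ge1}\sqrt{\lambda}\,S(\lambda)>0$ and $\delta_i\ge 2c_0\,a_i$. This already gives the claim when $a_i\le 1$; when $a_i\ge 1$ one instead notes that on the event $\{X_i'\ge\lambda_i+\Delta_i/2\}$ --- of probability bounded away from $0$ by \lemref{chernoff} --- one has $W_i\ge 2\lambda_i\,h\big(1+\Delta_i/(2\lambda_i)\big)$, which by the elementary properties of $h$ is bounded below by a positive universal constant, so $\delta_i$ is too.

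It remains to dominate the variances. By hypothesis $\E_1(T_n)-\E_0(T_n) = \eps\sum_i\delta_i\ge c\,\eps\sum_i(a_i\wedge1)\gg\sqrt n$, so $\big(\E_1(T_n)-\E_0(T_n)\big)^2\gg n\ge\tfrac18\Var_0(T_n)$, and the null variance is negligible. The alternative variance $\Var_1(T_n)$, however, can be inflated by coordinates with very large $a_i$ (for which $\delta_i$ exceeds $a_i\wedge1$ by a large factor and carries a correspondingly large second moment), so I would pass to $\widetilde T_n := \sum_i (W_i\wedge\tau)$ for an appropriate threshold $\tau$ (a large constant suffices in the regimes of interest, e.g.\ under the parameterization \eqref{parameter-dense}). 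The coupling computation above, supplemented when $a_i$ is large by the event $\{\P(\Upsilon_{\lambda_i}\ge X_i')<e^{-\tau/2}\}$ on which $W_i\wedge\tau=\tau$, still yields $\E_1(\widetilde T_n)-\E_0(\widetilde T_n)\gtrsim\eps\sum_i(a_i\wedge1)$, while truncation makes $\Var_0(\widetilde T_n)\vee\Var_1(\widetilde T_n)\le\tau^2 n = o\big((\eps\sum_i(a_i\wedge1))^2\big)$. Hence the quantity $a_n$ of \lemref{basic} tends to infinity and the (truncated) Fisher test is asymptotically powerful --- the cap $a_i\wedge1$ in the hypothesis is exactly what keeps the truncated mean shift of order $\gg\sqrt n$.

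For the ``in particular'' claim, under \eqref{eps} with $\beta<1/2$ and \eqref{parameter-dense} one has $a_i = \Delta_i/\sqrt{\lambda_i} = n^s$ for every $i$, so $\eps\sum_i(a_i\wedge1) = n^{1-\beta+(s\wedge0)}$, which is $\gg\sqrt n$ precisely when $s > \beta - 1/2 = \rho^{\rm one}_{\rm dense}(\beta)$.
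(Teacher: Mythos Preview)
Your coupling/hazard-rate argument for the mean shift is a clean alternative to the paper's approach. The paper instead writes $\E_{\lambda'}(F_\lambda)-\E_\lambda(F_\lambda)$ via an Abel-summation identity, restricts the resulting sum to $x\in[\lambda,\lambda+\sqrt\lambda]$, and combines Berry--Esseen, Stirling, and the mean-value theorem to extract the lower bound $\gtrsim a$. Your route through the Poisson hazard and the functional $S(\lambda)=\E[g_\lambda(\Upsilon_\lambda)]\asymp 1/\sqrt\lambda$ is arguably more transparent and in fact yields $\delta_i\ge 2c_0 a_i$ for \emph{all} $a_i>0$, so your separate treatment of $a_i\ge1$ is not even needed.

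There is, however, a genuine gap in your handling of the alternative variance. Passing to $\widetilde T_n=\sum_i(W_i\wedge\tau)$ changes the test: you end up showing that a \emph{truncated} Fisher statistic separates the hypotheses, not Fisher's test as stated in the proposition. Worse, your claim that the truncated mean shift is still $\gtrsim\eps\sum_i(a_i\wedge1)$ is not justified for the small-$a_i$ coordinates: the ``coupling computation above'' was carried out for the untruncated $W_i$, and transferring it to $W_i\wedge\tau$ requires knowing that $\P_{\lambda_i'}(W_i>\tau)$ is small uniformly in $\lambda_i\ge1$ and $a_i\le1$ --- i.e., a uniform tail bound on $W_i$ under $\Pois(\lambda_i')$, which is precisely the difficulty the truncation was meant to sidestep.

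The paper resolves this differently. It first observes that the power of Fisher's test is monotone nondecreasing in each $a_i$ (since $\Upsilon_{\lambda'}$ is stochastically increasing in $\lambda'$ and $x\mapsto-\log G_\lambda(x)$ is nondecreasing), so one may assume $a_i\le1$ for all $i$ without loss. With that reduction in hand, it bounds $\E_{\lambda'}(F_\lambda^2)$ directly for the \emph{untruncated} statistic: writing $R=g_{\lambda'}/g_\lambda$, one has $\E_{\lambda'}(F_\lambda^2)=\E_\lambda(F_\lambda^2 R)\le 2\E_\lambda(F_\lambda^2)+\E_\lambda[F_\lambda^2 R\,\IND{R>2}]$, and the tail term is controlled via Bohman's inequality $G_\lambda(x)\ge\bar\Phi\big((x-\lambda)/\sqrt\lambda\big)$ together with the bound $h(1+t)\ge\tfrac12 t^2/(1+t/3)$. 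This gives $\Var_1(T_n)=O(n)$ and \lemref{basic} applies to Fisher's test itself. In short: the monotonicity reduction (which you do not make) is what removes the large-$a_i$ obstruction, and after it the alternative variance must still be handled by a direct second-moment bound rather than by truncating the statistic.
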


To streamline the proof, which is somewhat long and technical, we implicitly focused on the most interesting case where the $a_i$'s are bounded, but this is not intrinsic to the method.  In fact, the test has increasing power with respect to each $a_i$.  
The technical proof is detailed in \secref{proof_fisher}.

\subsection{Sparse Regime}
In the sparse regime, the same results apply.  In particular, the detection boundary described in Propositions~\ref{prp:lower} and~\ref{prp:small_lb} applies.  
The max test --- now based on $\max_i Z_i$ --- and Bonferroni's method achieve the detection boundary in the very sparse regime ($\beta > 3/4$). 
The higher criticism is now based on 
\[T^\star = \sup_{x \in \cX_n} T(x), \quad T(x) := \frac{\sum_{i} \big(\IND{X_i > x} - G_{\lambda_i}(x)\big)}{\sqrt{\sum_i G_{\lambda_i}(x) (1- G_{\lambda_i}(x))}},
\]
with definition \eqref{pval1} and 
\[
\cX_n = \big\{x \in \bbN : \textstyle\sum_i G_{\lambda_i}(x) (1- G_{\lambda_i}(x)) \ge \log n\big\},
\]
and it achieves the detection boundary over the whole sparse regime ($\beta > 1/2$).  The technical arguments are parallel, and in fact simpler, and are omitted.

\subsection{Proof of \prpref{fisher}} \label{sec:proof_fisher}
\def\l{\lambda}

Let $V$ be the statistic \eqref{fisher}.
We seek to apply \lemref{basic}, which is based on the first two moments, under the null and under the alternative.  
In what follows, $\l \ge 1$ and $\l' = \l + a \sqrt{\l}$ with $0 < a \le 1$ for some constant $C > 0$.

{\bf Difference in means.}
For $\l > 0$, $g_\l(x) = \P(\Upsilon_\l = x)$, $G_\l(x) = \P(\Upsilon_\l \ge x)$, and $F_\l(X) = -2 \log G_\l(X)$.  
We have
\[
\E_\l(F_\l) = - 2 \sum_{x \ge 0} [\log G_\l(x)] g_\l(x) = 2 \sum_{x \ge 1} [\log G_\l(x-1) -\log G_\l(x)] G_\l(x),
\]
using the fact that $g_\l(x) = G_\l(x) - G_\l(x+1)$ and $G_\l(0) = 1$.  A similar expression holds for $\E_{\l'}(F_\l)$, and combined, we get
\[\begin{split}
\E_{\l'}(F_\l) -\E_\l(F_\l) 
&= 2 \sum_{x \ge 1} [\log G_\l(x-1) -\log G_\l(x)] [G_{\l'}(x) - G_\l(x)] \\
&= 2 \sum_{x \ge 1} \log \Big[1 + \frac{g_\l(x-1)}{G_\l(x)}\Big] [G_{\l'}(x) - G_\l(x)]. \\
\end{split}\]   
In that case, the summands are positive, since $\log G_\l(x-1) \ge \log G_\l(x)$ by monotonicity of $G_\l$, and $G_{\l'}(x) \ge G_\l(x)$ by the fact that $\Upsilon_{\l'}$ stochastically dominates $\Upsilon_\l$ when $\l' > \l$.
To get a lower bound, we may thus restrict the sum to any subset of $x$'s, and we choose $x \in I_\l := [\l, \l + \sqrt{\l}]$.  
Since $\l \ge 1$, $I_\l \ne \emptyset$.
Moreover,  
\[\frac1{C_0} \le G_\l(x) \le C_0, \quad \forall x \in I,\] 
for some universal constant $C_0>1$.  
This is a direct consequence of \lemref{berry-bound} when $\l \ge \l_0$ for some large-enough constant $\l_0$, and otherwise, it comes from the fact that $G_\l(x) > 0$ for all pairs $(\l,x)$ such that $\l < \l_0$ and $x \in I_\l$, which is a finite set of pairs.
We also have 
\[
\frac1{C_1 \sqrt{\l}} \le g_\l(x) \le \frac{C_1}{\sqrt{\l}}, \quad \forall x \in [\l-1, \l + \sqrt{\l}].
\]
for a numeric constant $C_1 > 1$.  
Indeed, by Stirling's formula, we have $g_\l(x) \asymp x^{-1/2} \exp(- \l h(x/\l))$, where we recall that $h(x) = x\log x - x+1$, and we have $x^{-1/2} \asymp \l^{-1/2}$, and also $\l h(x/\l)\asymp 1$, uniformly over $x \in I_\l$. 
We also have 
\[
\frac{g_\nu(x)}{g_\l(x)} \ge 1/C_2, \quad \forall x \in I, \quad \forall \nu \in [\l, \l'],
\]
for a numeric constant $C_2 > 1$.  
Indeed, 
\[\begin{split}
\frac{g_\nu(x)}{g_\l(x)} 
&\ge \exp\big[-\nu+\l + \l \log(\nu/\l)\big] = \exp\big[- \tfrac12 \tfrac{(\nu-\l)^2}{\l} + O\big(\tfrac{(\nu-\l)^3}{\l^2}\big) \big] \\
&\ge \exp\big[- \tfrac12 a^2 + O(a^3/\sqrt{\l})\big],
\end{split}\]
which is bounded from below when $a$ is bounded from above.
Using the fact that $\partial_\l G_\l(x) = g_\l(x-1)$, by the mean-value theorem, we also have $G_{\l'}(x) - G_\l(x) = (\l' - \l) g_{\l_x}(x)$, for some $\l_x \in [\l, \l']$, which together with the last two bounds implies that 
\[
G_{\l'}(x) - G_\l(x) \ge a/C_3, \quad \forall x \in I_\l,
\]
for a numeric constant $C_3 > 1$.  
Gathering all these results, we derive
\[
\E_{\l'}(F_\l) -\E_\l(F_\l) \ge 2 \sum_{x \in I_\l \cap \bbZ} \log\Big[1 + \frac1{C_0 C_1 \sqrt{\l}}\Big] \frac{a}{C_3} \ge \frac{a}{C_4},
\]
for another constant $C_4 > 1$, because $|I_\l \cap \bbZ| \asymp \sqrt{\lambda}$.

{\bf Variances.}
When $X \sim g_\l$, $G_\l(X)$ stochastically dominates $U \sim \Unif(0,1)$, and because $t \to (\log t)^2$ is decreasing on $(0,1)$, we have 
\[
\E_\l (F_\l^2) \le C_5 := 4 \E [(\log U)^2] < \infty.
\]
Let $R_{\l,\l'}(X) = g_{\l'}(X)/g_\l(X)$.  We have
\[
\E_{\l'} (F_\l^2) = \E_\l [F_\l^2 \ R_{\l,\l'}] \le 2 \E_\l (F_\l^2) + \E_\l[F_\l^2 \ R_{\l,\l'} \IND{R_{\l,\l'} > 2}].
\]
Note that $R_{\l,\l'}(x) > 2$ if, and only if, $x > x_* := (\Delta + \log 2)/\log(1 + \Delta/\l)$.  Hence,
\[
\E_\l[F_\l^2 R_{\l,\l'} \IND{R_{\l,\l'} > 2}] = \sum_{x \ge x_*} [\log G_\l(x)]^2 g_{\l'}(x).
\]

\begin{lem}[Bohman's inequality, as in Sec 35.1.8 of \cite{dasgupta2008asymptotic}] 
For any $\l > 0$, 
\[\P\big(\Upsilon_\l \ge x\big) \ge \bar{\Phi}\big(\tfrac{x-\l}{\sqrt{\l}}\big), \quad \forall x \in \bbN.\]
\end{lem}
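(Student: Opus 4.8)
The plan is to prove the equivalent pointwise statement: for every integer $k\ge 1$ and every $\lambda>0$,
\[
g(\lambda) := \P(\Upsilon_\lambda \ge k) - \bar\Phi\big(\tfrac{k-\lambda}{\sqrt\lambda}\big) \ \ge\ 0,
\]
which is the lemma with $k=x$ (the case $x=0$ being trivial). First I would record that $g$ vanishes at both ends of $(0,\infty)$: as $\lambda\to0^+$ both terms tend to $0$, and as $\lambda\to\infty$ both tend to $1$. Hence it is enough to show that $g$ is unimodal on $(0,\infty)$ — nondecreasing on an initial interval and nonincreasing thereafter — since then $g \ge \min\{g(0^+),g(\infty)\}=0$ throughout.

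To get unimodality I would analyze $g'$. The telescoping identity gives $\frac{d}{d\lambda}\P(\Upsilon_\lambda\ge k)=\P(\Upsilon_\lambda=k-1)=e^{-\lambda}\lambda^{k-1}/(k-1)!$, and a direct computation gives $\frac{d}{d\lambda}\bar\Phi\big(\tfrac{k-\lambda}{\sqrt\lambda}\big)=\tfrac{k+\lambda}{2\lambda^{3/2}}\,\phi\big(\tfrac{k-\lambda}{\sqrt\lambda}\big)$, with $\phi$ the standard normal density; hence
\[
g'(\lambda)=e^{-\lambda}\frac{\lambda^{k-1}}{(k-1)!}\;-\;\frac{k+\lambda}{2\lambda^{3/2}}\,\phi\big(\tfrac{k-\lambda}{\sqrt\lambda}\big).
\]
Both subtracted terms are positive, so $\operatorname{sign}g'(\lambda)=\operatorname{sign}\varphi_k(\lambda)$, where after taking logarithms and simplifying
\[
\varphi_k(\lambda)=-\lambda+\big(k+\tfrac12\big)\log\lambda+\frac{(k-\lambda)^2}{2\lambda}-\log(k+\lambda)-\log(k-1)!+\tfrac12\log(2\pi)+\log 2 .
\]
It therefore suffices to show that $\varphi_k$ has exactly one zero on $(0,\infty)$, necessarily a $+\to-$ crossing since $\varphi_k(0^+)=+\infty$ and $\varphi_k(\lambda)\to-\infty$ as $\lambda\to\infty$.

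The central observation is that $\varphi_k'(\lambda)=-\tfrac12+\tfrac{k+1/2}{\lambda}-\tfrac{k^2}{2\lambda^2}-\tfrac{1}{k+\lambda}$ simplifies, after clearing denominators, to a positive multiple of $-(\lambda-k)(\lambda^2+\lambda-k^2)$; so the only positive critical points are $\lambda=k$ and $\lambda_\star:=\tfrac12(\sqrt{1+4k^2}-1)$, which lies in $(k-\tfrac12,k)$. A sign check of the cubic then shows $\varphi_k$ is strictly decreasing on $(0,\lambda_\star)$, strictly increasing on $(\lambda_\star,k)$, and strictly decreasing on $(k,\infty)$, so $\lambda_\star$ is a local minimum and $\lambda=k$ is the unique local maximum. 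The remaining — and really the only — thing to check is that this local maximum value is nonpositive: substituting $\lambda=k$ collapses the expression to $\varphi_k(k)=-k+(k-\tfrac12)\log k-\log(k-1)!+\tfrac12\log(2\pi)$, which is $\le 0$ exactly by the elementary Stirling lower bound $(k-1)!\ge\sqrt{2\pi}\,k^{\,k-1/2}e^{-k}$ (itself immediate from $k!\ge\sqrt{2\pi k}\,(k/e)^k$). Granting this, $\varphi_k<0$ on all of $[\lambda_\star,\infty)$, while on $(0,\lambda_\star)$ it decreases from $+\infty$ to $\varphi_k(\lambda_\star)<0$ and hence crosses zero exactly once; this is the desired single sign change of $g'$, and the proof is complete.

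I expect the bound $\varphi_k(k)\le 0$ to be the only subtle point: it is the statement exactly at the transition $\lambda\asymp k$, where neither the large-deviation estimate of \lemref{chernoff} nor the Berry--Esseen bound of \lemref{berry-bound} is sharp enough to force an exact inequality, and it is what gives the lemma its content; away from $\lambda\asymp k$ the comparison has ample slack. One may alternatively simply invoke this as Bohman's inequality, as in \cite{dasgupta2008asymptotic}.
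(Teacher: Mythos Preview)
Your proof is correct, and in fact the paper offers no proof of this lemma at all: it simply states the result with a citation to \cite{dasgupta2008asymptotic} as a known inequality. So there is nothing to compare your approach against; you have supplied a self-contained argument where the paper only quotes the result.

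Your route---fixing $k$, studying $g(\lambda)=\P(\Upsilon_\lambda\ge k)-\bar\Phi((k-\lambda)/\sqrt\lambda)$ as a function of $\lambda$, reducing to a single $+\to-$ sign change of $g'$ via the auxiliary $\varphi_k$, and pinning that down with the exact factorization $2\lambda^2(k+\lambda)\varphi_k'(\lambda)=-(\lambda-k)(\lambda^2+\lambda-k^2)$---is clean and the computations check out. The crucial inequality $\varphi_k(k)\le 0$ does indeed reduce to $k!\ge\sqrt{2\pi k}\,(k/e)^k$, which is the sharp form of Stirling's lower bound (valid for all $k\ge 1$, with strict inequality), so the local maximum at $\lambda=k$ is strictly negative and the unimodality argument goes through. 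One small presentational point: you might state explicitly that the Stirling inequality is strict, so that $\varphi_k(k)<0$ and hence $\varphi_k$ is strictly negative on all of $[\lambda_\star,\infty)$; this removes any ambiguity about the sign change being unique rather than a touching zero.
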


This lemma, together with Mills ratio, yields
\[
\sum_{x \ge x_*} [\log G_\l(x)]^2 g_{\l'}(x) = O(1) \sum_{x \ge x_*} \Big(\frac{x-\l}{\sqrt{\l}}\Big)^4 x^{-1/2} \exp[- \l h(x/\l)],
\]
since, for any $x \ge x_*$, $\frac{x-\l}{\sqrt{\l}} \ge t_* := \frac{x_*-\l}{\sqrt{\l}} \asymp 1/a \ge 1$.
We learn in \citep[Prop 1, p. 441]{ShoWel} that $h(1+t) \ge \frac12 t^2 (1+\frac13 t)^{-1}$ for all $t \ge 0$.  Hence, 
\[\l h(x/\l) \ge \frac{(x-\l)^2}{2\l} \frac1{1+\frac13 \frac{x-\l}{\l}} \ge \frac{(x-\l)^2}{4\l} \IND{x \le 4\l} + \frac34 (x-\l) \IND{x > 4\l}.\]
Thus 
\[\begin{split}
\sum_{x \ge x_*} \Big(\frac{x-\l}{\sqrt{\l}}\Big)^4 x^{-1/2} \exp[- \l h(x/\l)] & \le \sum_{x_* \le x \le 4 \l} \Big(\frac{x-\l}{\sqrt{\l}}\Big)^4 x^{-1/2} \exp\Big[- \frac{(x-\l)^2}{4\l}\Big] \\
&\quad + \sum_{x > 4 \l} \Big(\frac{x-\l}{\sqrt{\l}}\Big)^4 x^{-1/2} \exp\Big[- \frac34 (x-\l)\Big].
\end{split}\]
The first sum is bounded by 
\[
\l^{-1/2} \sum_{t=t_*}^{\lceil 3\sqrt{\l} \rceil} \ \sum_{x = \lfloor \l + t \sqrt{\l}\rfloor}^{\lfloor \l + (t+1) \sqrt{\l} \rfloor} (t+1)^4 e^{-t^2/4} \le \sum_{t \ge t_*} (t+1)^4 e^{-t^2/4} = o(1).
\]
The second sum is bounded by 
\[
\l^{-5/2} \sum_{x > 4\l} (x -\l)^4 e^{-\frac34 (x -\l)} = \l^{-5/2} \sum_{x > 3\l} x^4 e^{-\frac34 x} \le C_6,
\]
for a numeric constant $C_6$, since $\l \ge 1$.
We conclude that 
\[
\E_{\l'} (F_\l^2) \le C_7,
\]
for some numeric constant $C_7$.

{\bf Conclusion.}
Since the test has increasing power with respect to each $a_i$, we may assume that $a_i \le 1$ for all $i$.
Let $F_{\l_i} = - 2 \log G_{\l_i}(X_i)$ and notice that $V = \sum_i F_{\l_i}$ is our test statistic.
We have
\[
\E_1(V) - \E_0(V) = \sum_i \big[\E_1(F_{\l_i}) - \E_0(F_{\l_i})\big] = \eps \sum_i \big[\E_{\l_i'}(F_{\l_i}) - \E_{\l_i}(F_{\l_i}) \big] \ge \eps \sum_i \frac{a_i}{C_4},
\]
and 
\[
\Var_0(V) \le \sum_i \E_{\l_i}(F_{\l_i}^2) \le n C_5,
\]
as well as
\[
\Var_1(V) \le \sum_i \E_{1}(F_{\l_i}^2) \le \sum_i \E_{\l_i'}(F_{\l_i}^2) \le n C_7.
\]
By \lemref{basic}, we conclude that the test is asymptotically powerful when
\[
\eps \sum_i a_i \gg \sqrt{n}.
\]

\section{Discussion} \label{sec:disc}

We drew a strong parallel between the Poisson means model and the normal means model.  The correspondence is in fact exact when all the $\lambda_i$'s are at least logarithmic in $n$.  When the $\lambda_i$ are smaller, we uncovered a new detection boundary in the sparse regime.  We studied the chi-squared test, the max test and the higher criticism, which are shown here to have similar properties as in the normal model.  Motivated by the higher criticism, we also advocated a multiple testing approach to Poisson means model, and studied emblematic approaches such as Fisher's and Bonferroni's methods, which are indeed shown to achieve the detection boundary in some regime/model.
An open direction might be to adapt the method of \cite{MR2275246} for estimating the number of non null effects in the Poisson means model.

\bigskip
\subsection*{Acknowledgements}
This work was partially supported by a grants from the US National Science Foundation (NSF) (DMS 1120888 and 1223137).

\end{document}